\documentclass[a4paper, 12pt]{amsart}
\usepackage[margin=2cm]{geometry}

\usepackage{amsmath}
\usepackage{amssymb}
\usepackage{calc}
\usepackage{enumitem}
\usepackage{graphicx}
\usepackage{tikz}
\usepackage{tikz-cd}
\usepackage{url}
\usepackage{xcolor}
\usepackage{etoolbox}
\usepackage{tabularx}
\usepackage{hyperref}
\usepackage{mathtools}
\usepackage{quiver}

\hypersetup{
  colorlinks   = true, 
  urlcolor     = {blue!50!black}, 
  linkcolor    = {blue!50!black}, 
  citecolor   = {red!50!black} 
}

\usepackage{cleveref}

\theoremstyle{plain}
\newtheorem{thm}{Theorem}[section]

\newtheorem{prop}[thm]{Proposition}
\newtheorem{lem}[thm]{Lemma}
\newtheorem{fact}[thm]{Fact}
\newtheorem*{thm*}{Theorem}
\newtheorem*{prop*}{Proposition}
\newtheorem*{lem*}{Lemma}
\newtheorem*{cor*}{Corollary}
\newtheorem*{claim*}{Claim}
\newtheorem*{fact*}{Fact}

\newenvironment{numthm}[1]
{\customthm}
{\endcustomthm}

\theoremstyle{definition}
\newtheorem{defn}[thm]{Definition}

\theoremstyle{remark}

\newtheorem*{term*}{Terminology}

\newtheorem*{qn*}{Question}

\DeclareMathOperator{\Age}{Age}
\DeclareMathOperator{\ar}{ar}
\DeclareMathOperator{\Aut}{Aut}

\DeclareMathOperator{\dplus}{d_{_{+}}\!}

\DeclareMathOperator{\id}{id}

\DeclareMathOperator{\lvl}{l}
\DeclareMathOperator{\Lvl}{L}

\DeclareMathOperator{\Or}{Or}

\DeclareMathOperator{\scl}{scl}

\newcommand{\N}{
	\mathbb{N}
}

\newcommand{\mc}[1]{
	\mathcal{#1}
}
\newcommand{\Czero}{
	\mathcal{C}_0
}

\newcommand{\Dzero}{
	\mathcal{D}_0
}
\newcommand{\Cone}{
	\mathcal{C}_1
}
\newcommand{\Done}{
	\mathcal{D}_1
}
\newcommand{\tDone}{
    \tilde{\mathcal{D}}_1
}
\newcommand{\Oone}{
	\mathcal{O}_1
}
\newcommand{\CF}{
	\mathcal{C}_F
}

\newcommand{\ov}[1]{
    \overline{#1}
}

\newcommand{\sub}{
    \subseteq
}
\newcommand{\sq}{
    \sqsubseteq
}
\newcommand{\tri}{
    \trianglelefteq
}
\newcommand{\tld}[1]{
    \tilde{#1}
}

\newcommand{\Honza}{Hubi\v{c}ka }

\newcommand{\Jarik}{Ne\v{s}et\v{r}il }

\newcommand{\Fr}{Fra\"{i}ss\'{e} }

\setlength{\parskip}{0.1cm}
\setlength{\parindent}{0pt}

\expandafter\patchcmd\csname\string\proof\endcsname{\normalparindent}{0pt }{}{}

\pagestyle{plain}

\title{Flows of linear orders on sparse graphs}
\author{Rob Sullivan}
\address{Rob Sullivan, Institut f\"{u}r Mathematische Logik, Universit\"{a}t  M\"{u}nster,  
Einsteinstraße 62,
48149  M\"{u}nster,
Germany}
\email{robertsullivan1990+maths@gmail.com}

\subjclass[2020]{03C15, 37B05, 20B27, 05C55, 05D10}


\keywords{sparse graphs, Hrushovski constructions, admissible orders, meagre orbits, orientations}

\thanks{This project formed the second part of the PhD of the author at Imperial College London, under the supervision of Prof David Evans.}

\date{\today}

\begin{document}

    \begin{abstract}
        We consider the topological dynamics of the automorphism group of a particular sparse graph $M_1$ resulting from an ab initio Hrushovski construction. We show that minimal subflows of the flow of linear orders on $M_1$ have all orbits meagre, partially answering a question of Tsankov regarding results of Evans, \Honza and \Jarik on the topological dynamics of automorphism groups of sparse graphs.
    \end{abstract}
    
    \maketitle

\section{Introduction}

    The paper \cite{KPT05} of Kechris, Pestov and Todor\v{c}evi\'{c} established links between topological dynamics and structural Ramsey theory, with further developments in \cite{NVT13}, \cite{Zuc16}, \cite{BMT17} (among others). We assume the reader is familiar with the background here, and briefly recall three key results, which we formulate for \emph{strong} classes (classes of structures where we restrict to a particular subclass of permitted embeddings -- see \cite[Definition 2.1]{EHN19}):

    \begin{thm*}[{\cite[Theorems 1.1 \& 1.2, Corollary 3.3]{BMT17}}]
        Let $G$ be a Polish group with universal minimal flow $M(G)$.
        \begin{enumerate}
            \item $M(G)$ is metrisable iff $G$ has a coprecompact extremely amenable closed subgroup;
            \item if $M(G)$ is metrisable, then $M(G)$ has a comeagre orbit.
        \end{enumerate}
    \end{thm*}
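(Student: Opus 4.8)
\emph{Proof sketch (plan).}
The two assertions are interlocked, so the plan is to prove the implication (2) (metrisability forces a comeagre orbit) first and then feed it into the forward direction of~(1), treating the reverse direction of~(1) separately, as it is the most self-contained. Throughout I would work inside the \emph{greatest ambit} $S(G)$, the Samuel compactification of $G$ for its right uniformity, equipped with the compact right-topological (Ellis) semigroup structure, and realise $M(G)$ as a minimal left ideal of $S(G)$ containing a minimal idempotent $u$. The group sits inside $S(G)$ as the principal ultrafilters $g\mapsto\delta_g$, and the $G$-action on $M(G)$ is left multiplication; the abstract correspondence between coprecompact extremely amenable subgroups and metrisable minimal flows that I would lean on is the one developed by Melleray--Nguyen Van Thé--Tsankov and, in the non-archimedean case, by \cite{Zuc16}.

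For the direction of~(1) asserting that a coprecompact extremely amenable closed subgroup $H$ makes $M(G)$ metrisable, I would form the completion $\widehat{G/H}$ of the coset space under the induced (right) uniformity. Coprecompactness of $H$ makes $\widehat{G/H}$ compact, and since $G$ is Polish the uniformity is metrisable, so the completion is compact \emph{metrisable}; it carries a continuous $G$-action and is thus a metrisable $G$-flow. Extreme amenability of $H$ is exactly what promotes it to the universal minimal flow: since $H$ then fixes some point $x_0\in M(G)$, the universal property of $\widehat{G/H}$ as the greatest ambit in which $H$ fixes the base point yields a $G$-map $\widehat{G/H}\to M(G)$, $gH\mapsto g\cdot x_0$, whose image is $\overline{Gx_0}=M(G)$ by minimality. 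Hence $M(G)$ is a continuous image of a compact metrisable space, and is therefore itself metrisable.

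The heart of the matter, and the step I expect to be the main obstacle, is~(2). Since $M(G)$ is minimal, every orbit is dense, and each orbit, being a continuous image of the Polish group $G$, is analytic and so has the Baire property; a topological zero-one law (the action is topologically transitive) then forces every orbit to be either meagre or comeagre, with at most one comeagre orbit. The problem thus reduces to exhibiting a \emph{single non-meagre orbit}, and this is genuinely hard because the space may well be an uncountable union of meagre orbits. Here I would exploit the semigroup structure: right multiplication $s\mapsto su$ is a continuous surjection $S(G)\to M(G)$, and I would try to show that the $G$-orbit of the distinguished idempotent $u$ is non-meagre, running a Baire-category/genericity argument inside $M(G)$ that is made possible precisely by metrisability. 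Establishing non-meagreness with no a priori transitivity is the crux; this is the content that abstract universality alone cannot supply and that the Ellis-semigroup analysis of \cite{BMT17} is designed to overcome.

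Finally, for the forward direction of~(1), I would take a point $x_0$ in the comeagre orbit produced by~(2) and set $H:=G_{x_0}$, its stabiliser. Because the orbit is non-meagre with the Baire property, an Effros-type argument shows the orbit map $G/H\to Gx_0$ is open, hence a homeomorphism onto a dense subset of the compact space $M(G)$; consequently $G/H$ is precompact, i.e.\ $H$ is coprecompact, and $M(G)\cong\widehat{G/H}$. It remains to prove that $H$ is extremely amenable, which I would do directly: given a compact $H$-flow $Y$, co-induce it to a $G$-flow, apply the universality of $M(G)$ to obtain a $G$-map into the co-induced flow, and evaluate at $x_0$ to extract an $H$-fixed point of $Y$. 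The comeagreness (Baire-genericity) of the orbit of $x_0$ is exactly what compensates for the possible absence of continuous sections $G/H\to G$ and lets this extraction go through. Combining the two directions yields the stated equivalence.
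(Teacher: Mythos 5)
First, a point of comparison: the paper does not prove this statement at all. It is quoted verbatim as background and attributed to \cite{BMT17} (Ben Yaacov--Melleray--Tsankov), so there is no in-paper argument to measure your sketch against; the only question is whether your proposal would constitute a proof of the cited theorem.

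It would not, because the central claim is left unproved. Your treatment of the reverse direction of (1) is essentially the standard argument (due to Melleray--Nguyen Van Th\'e--Tsankov, cf.\ also \cite{NVT13}, \cite{Zuc16}): coprecompactness gives compactness of $\widehat{G/H}$, metrisability of $G$ gives metrisability of the completion, extreme amenability of $H$ gives a fixed point $x_0 \in M(G)$ and hence a continuous $G$-map $\widehat{G/H} \to M(G)$ which is onto by minimality; this is fine. Likewise the reduction of (2) to exhibiting a single non-meagre orbit (via analyticity of orbits, the Baire property, and the topological zero--one law) is correct, and the Effros-type derivation of coprecompactness of the stabiliser in the forward direction of (1) is the right idea. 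But item (2) -- the assertion that metrisability alone forces a comeagre orbit -- is precisely the theorem of \cite{BMT17}, and at that point your sketch says only that you ``would try to show that the $G$-orbit of the distinguished idempotent $u$ is non-meagre'' and acknowledges that ``establishing non-meagreness with no a priori transitivity is the crux.'' That is an accurate diagnosis of where the difficulty lies, but it is not an argument: the minimal idempotent $u$ of a minimal left ideal of $S(G)$ carries no canonical genericity, the map $s \mapsto su$ is right-topological rather than jointly continuous, and nothing in the sketch uses the universality of $M(G)$ among \emph{all} minimal flows, which is indispensable (a general metrisable minimal flow of a Polish group can perfectly well have all orbits meagre -- indeed that is exactly the phenomenon established for $M_1$ in Theorem \ref{M1meagre}). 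Since the forward direction of (1) is fed by (2), it inherits the gap; the co-induction step used there to extract extreme amenability of the stabiliser also needs justification (compactness of the co-induced flow and continuity of the $G$-action are not automatic for non-locally-compact $G$), though that part is a known technique. In short: the outer scaffolding is sound, but the load-bearing step is missing.
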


    \begin{thm*}[{\cite[Theorem 4.8]{KPT05}}]
        Let $M$ be a \Fr limit of a strong amalgamation class $(\mc{K}, \leq)$. Then $\Aut(M)$ is extremely amenable iff $(\mc{K}, \leq)$ is a Ramsey class of rigid structures.
    \end{thm*}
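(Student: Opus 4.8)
The plan is to prove this \KPT correspondence by routing everything through a purely combinatorial reformulation of extreme amenability. Write $G = \Aut(M)$ and recall that the pointwise stabilisers $G_{(A)}$ of finite $A \le M$ form a neighbourhood basis of the identity consisting of open subgroups. The first and central step is to isolate the following \emph{combinatorial criterion}: $G$ is extremely amenable iff for all finite $A \le B$ in $\mc K$, all $r \ge 1$, and every colouring $\chi \colon \binom{M}{A} \to \{1, \dots, r\}$ of the strong copies of $A$ in $M$, there is a strong copy $B' \le M$ of $B$ with $\binom{B'}{A}$ $\chi$-monochromatic. I would establish this via the greatest-ambit description of extreme amenability: $G$ is extremely amenable iff the Samuel compactification built from $\mathrm{RUCB}(G)$ carries a $G$-fixed point, and since $G \le S_\infty$ has a basis of open subgroups, the functions constant on cosets of some $G_{(A)}$ are dense in $\mathrm{RUCB}(G)$. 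Identifying $G/G_{(A)}$ with the set of strong embeddings of $A$ into $M$ and translating the finite-oscillation-stability form of the fixed-point condition through these cosets yields exactly the displayed monochromatic-copy statement. This equivalence is the main obstacle, since it is where the topological content is converted into finite combinatorics; everything afterwards is bookkeeping.

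Granting the criterion, the \emph{if} direction is quick. Assume $(\mc K, \le)$ is a Ramsey class of rigid structures; rigidity means copies and strong embeddings coincide, so the ambiguity in $\binom{M}{A}$ disappears. Given $A \le B$, $r$ and $\chi$ as above, the Ramsey property supplies $C \in \mc K$ with $B \le C$ and $C \to (B)^A_r$. I would fix a strong embedding of $C$ into $M$, pull $\chi$ back along it to an $r$-colouring of $\binom{C}{A}$, apply $C \to (B)^A_r$ to obtain a monochromatic copy of $B$ inside $C$, and push it forward into $M$. This verifies the combinatorial criterion, hence extreme amenability.

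For the \emph{only if} direction I would first extract rigidity without the criterion. The space $LO(M)$ of linear orders on $M$ is a compact $G$-flow, so extreme amenability gives a $G$-invariant linear order $\prec$. For any $A \in \mc K$ and any automorphism $\sigma$ of $A$, ultrahomogeneity of $M$ extends $\sigma$ to some $\hat\sigma \in \Aut(M)$, which preserves $\prec$; but an order-preserving permutation of the finite linearly ordered set $(A, \prec\restriction A)$ is the identity, so $\sigma = \id$ and every member of $\mc K$ is rigid. It then remains to deduce the abstract Ramsey property from the combinatorial criterion by compactness. Fixing $A \le B$ and $r$, suppose no $C \in \mc K$ satisfies $C \to (B)^A_r$. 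Since $\mc K$ is hereditary, for every finite $C' \le M$ there is a colouring of $\binom{C'}{A}$ with no monochromatic copy of $B$ in $C'$, and any finite family of such local data can be realised simultaneously inside a common finite superstructure. Hence the sets $\{\chi : \binom{C'}{A}$ carries no monochromatic copy of $B\}$, each depending on finitely many coordinates and so closed in the compact space $\{1, \dots, r\}^{\binom{M}{A}}$, have the finite intersection property; a point of their total intersection is a global $\chi$ with no monochromatic copy of $B$ anywhere in $M$, contradicting the combinatorial criterion. Therefore $(\mc K, \le)$ is Ramsey, completing the proof.
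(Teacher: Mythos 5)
The paper does not prove this statement: it is quoted verbatim as background, with the proof deferred to \cite[Theorem 4.8]{KPT05} (and its formulation for strong classes to \cite[Theorem 2.13]{EHN19}). Your sketch reproduces the standard Kechris--Pestov--Todor\v{c}evi\'{c} argument correctly and in the right order: the oscillation-stability/greatest-ambit reformulation of extreme amenability for a group with a basis of open subgroups $G_{(A)}$, rigidity extracted from a fixed point in the compact flow of linear orders on $M$, and a compactness argument to pass between the finite Ramsey property and its ``infinite'' form inside $M$. The only imprecision is that the combinatorial criterion should be phrased for colourings of strong \emph{embeddings} of $A$ into $M$ (i.e.\ of $G/G_{(A)}$) rather than of copies $\binom{M}{A}$, the two differing by a factor of $\Aut(A)$; since rigidity is hypothesised in the ``if'' direction and is established before the criterion is invoked in the ``only if'' direction, this does not affect the validity of your argument.
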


    \begin{thm*}[{\cite[Theorem 10.8]{KPT05}, \cite[Theorem 5]{NVT13}, \cite[Theorem 5.7]{Zuc16}}]
        Let $M$ be the \Fr limit of an amalgamation class $(\mc{K}, \leq)$, and let $N$ be the \Fr limit of an amalgamation class $(\mc{K}^+, \leq^+)$ of rigid structures which is a reasonable strong expansion of $(\mc{K}, \leq)$. Let $G = \Aut(M), H = \Aut(N)$. Suppose $(\mc{K}^+, \leq^+)$ has the Ramsey property and the expansion property over $(\mc{K}, \leq)$, and suppose $H$ is a coprecompact subgroup of $G$.
			
        Then the universal minimal flow $M(G)$ of $G$ is metrisable and has a comeagre orbit. Explicitly, we have $M(G) = \widehat{G/H}$, the completion of the quotient $G/H$ of the right uniformity on $G$.
    \end{thm*}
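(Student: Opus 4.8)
The plan is to identify the flow $\widehat{G/H}$ with a concrete space of $\leq^+$-expansions of $M$, show that the two hypotheses force this flow to be the universal minimal flow, and then read off metrisability and a comeagre orbit. Fix $M$ as the reduct of $N$, so that $H = \Aut(N)$ is precisely the $G$-stabiliser of the distinguished expansion. Since $(\mc{K}^+, \leq^+)$ is a Ramsey class of rigid structures, the second theorem quoted above gives that $H$ is extremely amenable. Since $H$ is coprecompact in $G$, the right-uniform coset space $G/H$ is totally bounded, so its completion $\widehat{G/H}$ is a compact $G$-flow; and as $G$ is Polish with $H$ closed, $G/H$ is separable and its uniformity metrisable, whence $\widehat{G/H}$ is compact metrisable. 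Concretely, its points are the expansions $M^+$ of $M$ all of whose finite substructures lie in $\mc{K}^+$, with $G$ permuting them and the distinguished point being $N$ itself.

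I would first extract universality from extreme amenability. For an arbitrary $G$-flow $Y$, extreme amenability of $H$ yields an $H$-fixed point $y_0$; the orbit map $g \mapsto g y_0$ then factors through $G/H$ and, being right-uniformly continuous into the compact space $Y$, extends to a $G$-map $\widehat{G/H} \to Y$. Taking $Y = M(G)$ gives a surjection $\widehat{G/H} \twoheadrightarrow M(G)$.

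The expansion property enters to force minimality of $\widehat{G/H}$. Unwinding the concrete picture, a basic neighbourhood of an expansion is pinned down by its restriction to a finite strong substructure $A \leq M$; given any target expansion, the expansion property supplies a finite $B \in \mc{K}$ such that inside every $\leq^+$-expansion a strong copy of $B$ already realises a copy of every $\leq^+$-expansion of $A$. Locating such a copy of $B$ in the target and invoking homogeneity of $M$ produces $g \in G$ carrying the starting expansion into the prescribed neighbourhood, so every orbit is dense and $\widehat{G/H}$ is minimal. Composing with the surjection above, $\widehat{G/H}$ is then a minimal flow mapping onto $M(G)$, and hence (via the universal surjection out of $M(G)$) onto every minimal flow; by the uniqueness of the universal minimal flow this yields $M(G) \cong \widehat{G/H}$, which we have already seen is metrisable.

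For the comeagre orbit, the distinguished point has orbit the canonically embedded dense copy of $G/H$; as $G$ is Polish and $H$ closed, this is a dense Polish, hence $G_\delta$, subspace of $\widehat{G/H}$ and is therefore comeagre --- alternatively, comeagreness of some orbit is immediate from part (2) of the first theorem quoted above once metrisability is known. The step I expect to be the main obstacle is the minimality argument: it demands a careful concrete model of $\widehat{G/H}$ as the space of admissible expansions, a verification that the topology and $G$-action on that model match the abstract completion, and the exact combinatorial translation of the expansion property into density of orbits. By comparison the extreme-amenability half is a clean application of the quoted Ramsey criterion together with the standard lemma on coprecompact subgroups.
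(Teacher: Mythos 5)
This is a background theorem that the paper only quotes (with citations to Kechris--Pestov--Todor\v{c}evi\'{c}, Nguyen Van Th\'{e} and Zucker) and does not prove, so there is no in-paper argument to compare against; your outline is the standard proof from those references and is essentially correct: extreme amenability of $H$ from the Ramsey property, universality of $\widehat{G/H}$ via $H$-fixed points and right-uniform continuity of orbit maps, minimality from the expansion property (this is exactly \Cref{miniffexp}), the isomorphism $M(G)\cong\widehat{G/H}$ from uniqueness/coalescence of the universal minimal flow, and comeagreness of the orbit of the distinguished point as a dense Polish (hence $G_\delta$) subset. You correctly identify the only technically delicate step, namely the $G$-flow identification of $\widehat{G/H}$ with the concrete space $X(\mc{K}^+)$ of expansions, which is where precompactness (coprecompactness of $H$) and reasonableness of the expansion are used.
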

    (We can also describe the comeagre orbit explicitly -- see the references for further details.)
    
    \medskip
    
    The paper \cite{EHN19}, which was the starting point for the current paper, showed that classes of sparse graphs used in Hrushovski constructions (\cite{Hru88}, \cite{Hru93}) demonstrate different behaviour to classes previously studied in the KPT context. A graph $A$ is \emph{$k$-sparse} if for all finite $B \sub A$, we have $|E(B)| \leq k |B|$. It is well-known (\cite{Nas64}, \cite[Theorem 3.4]{EHN19}) that a graph is $k$-sparse iff it is $k$-orientable. We take $k=2$ for presentational simplicity. 
    
    We briefly describe the classes $\Czero, \Cone, \CF$ of sparse graphs found in \cite{EHN18}, \cite{EHN19}. Let $\Czero$ denote the class of finite $2$-sparse graphs. For $A, B \in \Czero$, we write $A \leq_s B$ if there exists a $2$-orientation of $B$ in which $A$ is successor-closed (by \cite[Lemma 1.5]{Eva03}, this is equivalent to another phrasing in terms of predimension). With this notion of $\leq_s$-substructure, we have the free amalgamation class $(\Czero, \leq_s)$ with \Fr limit $M_0$ (this structure, an ``ab initio Hrushovski construction", was first described in \cite{Hru93}). 

    We also have a ``simplified" version of $M_0$, denoted by $M_1$ and first studied in \cite{Eva03}. This is the key structure we consider in this paper. Let $\Done$ denote the class of finite $2$-oriented graphs with no directed cycles, and let $\Cone$ be the class of graph reducts of structures in $\Done$. For $A, B \in \Cone$, write $A \leq_1 B$ if there exists an expansion $B^+ \in \Done$ in which $A$ is successor-closed. Then $(\Cone, \leq_1)$ is again a free amalgamation class, and we write $M_1$ for its \Fr limit.
    
    The structures $M_0, M_1$ are not $\omega$-categorical, but if we consider $2$-sparse graphs whose predimension is greater than a certain control function $F$ with logarithmic growth, and take another notion of $\leq_d$-substructure (where the predimension strictly increases), we obtain an $\omega$-categorical \Fr limit $M_F$ (see \cite{EHN19} and \cite[Section 3]{Eva13} for details). 
    
    We then have:

    \begin{thm*}[{\cite[Theorems 3.7, 3.16]{EHN19}}]
        Let $M = M_1, M_0, M_F$. Then $\Aut(M)$ has no coprecompact extremely amenable closed subgroup, and so its universal minimal flow is non-metrisable.

        Equivalently, by \cite[Theorem 4.8]{KPT05}, we have that $M$ has no coprecompact Ramsey expansion.
    \end{thm*}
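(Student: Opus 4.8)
The plan is to isolate the genuine content in a combinatorial obstruction coming from the Hrushovski construction, treating everything else as formal consequences of the three quoted results. First I observe that the second sentence is immediate: by part (1) of \cite{BMT17}, metrisability of the universal minimal flow $M(G)$ of $G = \Aut(M)$ is \emph{equivalent} to the existence of a coprecompact extremely amenable closed subgroup, so once the first assertion is proved, non-metrisability follows formally. The final ``equivalently'' clause is the first assertion read through \cite[Theorem 4.8]{KPT05} together with the standard dictionary between closed subgroups of $\Aut(M)$ and expansions of $M$: an extremely amenable closed subgroup $H \leq G$ is (the stabiliser of) the automorphism group of a \Fr limit of a Ramsey class of rigid structures expanding $M$, and coprecompactness of $H$ in $G$ corresponds exactly to precompactness (``coprecompactness'') of that expansion. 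Everything thus reduces to the single statement $(\star)$: $G=\Aut(M)$ has no coprecompact extremely amenable closed subgroup, equivalently $M$ admits no precompact Ramsey expansion.

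To prove $(\star)$ I would argue by contradiction, assuming $H \leq G$ is closed, extremely amenable and coprecompact. Extreme amenability gives a fixed point for every continuous action of $H$ on a compact space; applying this to the compact $G$-flow $\mathrm{LO}(M)$ of linear orders on $M$ yields an $H$-invariant linear order $\prec$, and applying it to the compact $G$-flow of $2$-orientations of $M$ (this is the feature distinguishing these classes, since $2$-sparseness is equivalent to $2$-orientability and the classes $(\Cone, \leq_1)$, $(\Czero, \leq_s)$, $\CF$ are \emph{defined} through $2$-orientations) yields an $H$-invariant $2$-orientation. Coprecompactness of $H$ then says precisely that the associated expansion $M^\ast$ of $M$ records only finitely many isomorphism types on each finite substructure; equivalently, the expansion-data attached to any finite tuple of $M$ is uniformly bounded.

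The heart of the matter, and the step I expect to be the main obstacle, is to contradict this uniform bound using the self-sufficient-closure structure of the Hrushovski construction. The point is that a precompact expansion cannot ``see'' the intrinsic closure $\leq_1$ (resp.\ $\leq_s$, $\leq_d$): the self-sufficient closure of a finite set in $M$ can be arbitrarily large and is governed by long directed paths in the $2$-orientation, information that a uniformly bounded expansion necessarily fails to encode. Concretely, I would fix a small configuration $A$ and produce, for each $n$, copies of $A$ in $M$ that are indistinguishable in $M^\ast$ (same expansion-type) but whose global closure behaviour differs; colouring copies of $A$ by a bounded invariant that nonetheless flips with this global behaviour (for instance the induced orientation of a distinguished edge, or a parity read off the directed paths) yields a colouring with no monochromatic copy of a suitable larger $B \geq A$, so that the age of $M^\ast$ fails the Ramsey property. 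This contradicts the choice of $H$ and establishes $(\star)$. For $M = M_F$, where $\Aut(M_F)$ is oligomorphic, the phenomenon is cleanest: a precompact expansion stays oligomorphic, and the obstruction can be phrased as a growth estimate on orientation-types along the logarithmic control function $F$; for $M = M_0, M_1$ one runs the colouring argument directly in the (non-$\omega$-categorical) limit. In all three cases the soft topological-dynamics input is identical, and the real work is the combinatorics of $2$-orientations and free amalgamation witnessing that no uniformly bounded expansion can be Ramsey.
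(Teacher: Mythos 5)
First, a point of orientation: the statement you are proving is quoted background, cited to \cite[Theorems 3.7, 3.16]{EHN19}; the present paper contains no proof of it, so the only meaningful comparison is with the argument in that reference.

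Your formal reductions are fine: deducing non-metrisability from the absence of a coprecompact extremely amenable closed subgroup via \cite[Theorem 1.1]{BMT17}, translating between such subgroups and precompact Ramsey expansions via \cite[Theorem 4.8]{KPT05} (and its strong-class/Zucker refinements), and using extreme amenability to extract an $H$-invariant point of the compact flow $\Or(M)$ of $2$-orientations. But the entire content of the theorem lives in the step you flag as ``the heart of the matter'', and there you give a research plan rather than an argument. You assert that for each $n$ one can find copies of a small configuration $A$ that are indistinguishable in the expansion $M^\ast$ yet have different global closure behaviour, and that colouring by ``the induced orientation of a distinguished edge, or a parity read off the directed paths'' defeats the Ramsey property. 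Nothing is constructed and nothing is verified: you do not exhibit the configurations, do not show the proposed invariant is well defined and bounded, and do not show the resulting colouring admits no monochromatic copy of a suitable $B$. This is precisely the content of the cited theorems, which in \cite{EHN19} requires a genuine construction (an unbounded family of finite ``orientation-forcing'' gadgets, in the spirit of the trees $T_0(q), T_1(q)$ used in Section 4 of the present paper, showing that any invariant $2$-orientation forces arbitrarily long directed paths and hence unboundedly many orbits).

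There is also a structural soft spot in how you propose to reach the contradiction. Extreme amenability does not pass to overgroups, and $H$ need not equal $\Aut(M^\ast)$ for the expansion $M^\ast$ you build from the invariant order and orientation; so ``the age of $M^\ast$ fails the Ramsey property'' does not by itself contradict anything about $H$ unless you first pass to the canonical precompact expansion attached to $H$ (\`a la Zucker/\cite{BMT17}) and prove Ramsey failure for \emph{every} precompact expansion refining an orientation. The cleaner route, and the one actually taken in \cite{EHN19}, avoids Ramsey theory at this point entirely: coprecompactness passes to overgroups, so it suffices to show that $\Aut(M, \rho)$ has infinitely many orbits on $M^n$ for some $n$ and every $2$-orientation $\rho$ of $M$ (because the out-closure of a point under $\rho$ is an invariant of unbounded size). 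As it stands, your proposal correctly isolates where the work is but does not do that work, so it is not a proof.
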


    The case of $M_F$ is particularly interesting as it shows that the automorphism group of an $\omega$-categorical structure need not have ``tame" dynamics in the sense of metrisability of the universal minimal flow, and that $\omega$-categorical structures are not necessarily tame from a structural Ramsey theory perspective either.

    The paper \cite{EHN19} also investigates the existence of comeagre orbits. Let $\Or(M)$ denote the $\Aut(M)$-flow of $2$-orientations on $M$.

    \begin{thm*}[{\cite[Theorem 5.2]{EHN19}}]
        Let $M = M_1, M_0, M_F$. Let $Y$ be a minimal subflow of $\Or(M)$. Then all $\Aut(M)$-orbits of $Y$ are meagre.
    \end{thm*}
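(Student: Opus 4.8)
The plan is to rule out a comeagre orbit and then invoke the meagre/comeagre dichotomy. Recall that for a continuous action of a Polish group on a Polish space every orbit, being analytic, has the Baire property, and by the Effros dichotomy is either meagre or comeagre in its closure. Since $\Or(M)$ is compact metrisable and $Y$ is minimal, every orbit of $G := \Aut(M)$ is dense in $Y$, so each orbit is either meagre in $Y$ or comeagre in $Y$; as distinct orbits are disjoint and comeagre sets pairwise meet, at most one orbit can be comeagre. Hence it suffices to show that $Y$ has no comeagre orbit.

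Suppose for contradiction that $\mc{O} = G\cdot y$ is comeagre in $Y$. By Effros's theorem the orbit map $G \to \mc{O},\ g \mapsto g\cdot y$ is open. Fix an edge $A = \{a,b\} \sub M$ chosen so that neither out-neighbour of $a$ in $y$ is adjacent to $b$ (possible since $a$ has at most two out-neighbours, while $b$ ranges over the infinitely many neighbours of $a$). The sets $V_C := \{z \in Y : z|_{\Edg(C)} = y|_{\Edg(C)}\}$, recording agreement of the orientation on all edges internal to a finite $C$, form a neighbourhood basis at $y$, so openness yields a finite $B \supseteq A$ with
\[
(\star)\qquad z \in \mc{O} \ \text{ and }\ z|_{\Edg(B)} = y|_{\Edg(B)} \ \Longrightarrow\ z \in G_{(A)}\cdot y,
\]
where $G_{(A)}$ denotes the pointwise stabiliser of $A$. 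I will contradict $(\star)$ by producing $z \in \mc{O}$ agreeing with $y$ on $\Edg(B)$ yet lying outside $G_{(A)}\cdot y$.

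The key observation is that any $h \in G_{(A)}$ with $h\cdot y = z$ is an isomorphism $(M,y) \isoto (M,z)$ of oriented graphs fixing $a$ and $b$; it therefore carries the out-neighbours of $a$ in $y$ bijectively onto those of $a$ in $z$, and since $h$ fixes $b$ it preserves adjacency to $b$. Thus, if in $z$ some out-neighbour of $a$ is adjacent to $b$ while in $y$ none is, then no element of $G_{(A)}$ can map $y$ to $z$. The task reduces to constructing an automorphism $g$ with $(g\cdot y)|_{\Edg(B)} = y|_{\Edg(B)}$ for which $a$ acquires an out-neighbour adjacent to $b$. Here the asymmetry of $2$-orientations enters: every vertex of $M$ has out-degree at most $2$ but, by genericity of the \Fr limit, infinitely many in-neighbours of any prescribed type over $B$. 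Using the extension property of $(\Cone, \leq_1)$ (respectively $(\Czero, \leq_s)$ and $(\CF, \leq_d)$) one amalgamates over $B$ a $\leq_1$-embeddable copy of the desired reconfigured oriented neighbourhood of $a$, reroutes the relevant edges, all of which lie outside $\Edg(B)$, and realises the result by an automorphism $g$ with $g|_{\Edg(B)} = \id$; then $z := g\cdot y \in \mc{O}$ agrees with $y$ on $\Edg(B)$ but has an out-neighbour of $a$ adjacent to $b$, so $z \notin G_{(A)}\cdot y$, contradicting $(\star)$.

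The main obstacle is exactly this last construction, and two points require care. First, one must ensure the reconfigured orientation is a legitimate element of the \emph{given} minimal subflow $Y$, not merely of $\Or(M)$; this is precisely why we realise the change by an honest automorphism $g \in G$, so that $z = g\cdot y \in G\cdot y \sub Y$ for free, rather than by an ad hoc edge-flip, and hence why the reconfiguration must be phrased as an amalgamation respecting the relevant closure. Second, and more delicate, one must verify that the out-degree bound of $2$, together with acyclicity in the case of $M_1$, actually prevents a compensating rerouting that would be available in a denser graph and that would let an $A$-fixing automorphism undo the change; this is where the precise predimension and closure notion for each of $M_1, M_0, M_F$ is used, the three cases being handled uniformly once the correct notion ($\leq_s$, $\leq_1$, or $\leq_d$) is substituted. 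Granting these, $(\star)$ fails for every candidate $B$, so no orbit of $Y$ is comeagre, and by the dichotomy above all orbits of $Y$ are meagre.
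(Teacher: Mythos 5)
There is a genuine gap here, and the sketch is in fact internally inconsistent at its crucial step. Your topological reduction is fine: orbits are analytic, the action on the minimal flow $Y$ is topologically transitive, so each orbit is meagre or comeagre in $Y$ and it suffices to rule out a comeagre orbit; Effros's theorem then gives your condition $(\star)$. (This reduction is exactly what the paper packages as Fact~\ref{nowapmeagre}, via failure of the weak amalgamation property.) The problem is the construction that is supposed to contradict $(\star)$. You propose to produce $z = g\cdot y$ in which some out-neighbour of $a$ is adjacent to $b$, ``realised by an automorphism $g$ with $g|_{\Edg(B)} = \id$''. But any $g$ fixing the vertices incident to edges of $B$ fixes $a$ and $b$, hence satisfies $N^{+}_{g\cdot y}(a) = g\bigl(N^{+}_{y}(a)\bigr)$, and since $g$ is a graph automorphism fixing $b$, $g(c)$ is adjacent to $b$ iff $c$ is. So if no out-neighbour of $a$ in $y$ is adjacent to $b$, the same holds in $g\cdot y$: your witness $z$ lies in $G_{(A)}\cdot y$ (indeed $g \in G_{(A)}$) and does not contradict $(\star)$ at all. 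To repair this you would need $g \notin G_{(A)}$ with $(g\cdot y)|_{\Edg(B)} = y|_{\Edg(B)}$ while the chosen $G_{(A)}$-invariant at $a$ changes; producing such a $g$ inside the given minimal subflow is the entire content of the theorem, nothing in the proposal addresses it, and it is not even clear that this particular invariant can be varied within a single orbit.

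Note also that this paper does not prove the statement --- it quotes \cite[Theorem 5.2]{EHN19} --- but it does describe the mechanism of that proof, which it adapts in \Cref{M1section}. The route there is to show that the class of finite oriented graphs $\leq$-embedding into elements of $Y$ fails the weak amalgamation property: over any candidate WAP-witness one exhibits two extensions forcing incompatible oriented structures reachable from a fixed vertex (a binary out-directed tree versus a near-tree containing a $4$-cycle), and these cannot coexist from the same vertex because the out-degree bound $2$ pins down the outward-reachable structure. That is the kind of concrete, verifiable incompatibility your argument would need in place of the ``reroute the edges and realise it by an automorphism'' step, which as written does not go through.
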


    Note that if $M(G)$ has a comeagre orbit, then so does any minimal $G$-flow (see \cite{AKL14}), so the above result shows again that $\Aut(M)$ for $M = M_1, M_0, M_F$ has non-metrisable universal minimal flow, using \cite[Theorem 1.2]{BMT17}. In the context of the above result, T.\ Tsankov asked the following (\cite[concluding remarks]{EHN19}):

    \begin{qn*}[Tsankov]
        Let $M = M_1, M_0, M_F$. Does $\Aut(M)$ have a (non-trivial) metrisable minimal flow with a comeagre orbit?
    \end{qn*}

    David Evans suggested that the author investigate the $\Aut(M)$-flow $\mc{LO}(M)$ of linear orders on $M$. We obtain the following result (the main result of this paper), for $M_1$, the ``simplified version" of $M_0$:
		
    \begin{numthm}{\ref{M1meagre}}
        Let $Y \sub \mc{LO}(M_1)$ be a minimal subflow. Then all $\Aut(M_1)$-orbits on $Y$ are meagre.
    \end{numthm}

    This result demonstrates that the phenomenon seen in \cite[Theorem 5.2]{EHN19} occurs more generally for other flows on $M_1$, partially answering the question of Tsankov. We also note in passing that $M_1$ is $\omega$-saturated and its theory is $\omega$-stable (see \cite{Eva03}).
    
    To prove \Cref{M1meagre}, we take the class of finite ordered graphs which $\leq$-embed into some element of the minimal flow $Y$, and show that this class fails to have the weak amalgamation property -- this gives the result, using \Cref{nowapmeagre}. To show failure of the weak amalgamation property, we will use the Ramsey expansion given by the \emph{admissible orders}, from \cite[Section 3.1]{EHN21}. We discuss these in \Cref{adm orders section}.
    
    The author has not been able to extend Theorem \ref{M1meagre} to $M_0$ and $M_F$, and believes that the proof strategy for $M_1$ would require significant modification for these cases. Partial results for $M_0$ (giving some information about minimal subflows of $\mc{LO}(M_0)$, and clarifying obstructions to the proof strategy) can be found in Chapter 5 of \cite{Sul22}, the author's PhD thesis.

    Recall that we consider $M_1$ to be a ``simplified version" of $M_0$. It would be interesting to know if there is an analogous ``simplified version" of $M_F$ -- if so, it may be possible to prove \Cref{M1meagre} for an $\omega$-categorical structure. See \cite[Chapter 7]{Sul22}.
 
\section{Background} \label{backgroundchap}
	
    We briefly summarise the material required for \Cref{adm orders section} and \Cref{M1meagre}. We assume thorough familiarity with \cite{EHN19} and the background and notation provided therein. This section contains no new material. (A reader looking for a less streamlined presentation may consult Chapter 1 of the author's PhD thesis \cite{Sul22}, again mostly based on \cite{EHN19}.)
    
    All first-order languages considered in this paper will be countable and relational, unless specified otherwise. (For the Ramsey result that we use, we will also need to consider countable languages consisting of relation symbols and set-valued function symbols, as in \cite{EHN21}.)
	
\subsection{Graphs and oriented graphs: notation} \label{graphnotationsec}
	
    We write $E_A \sub A^2$ for the (symmetrised) edge set of a graph $A$ (where $(x, y) \in E_A$ iff $\{x, y\}$ is an undirected edge of $A$), and write $\rho_A \sub A^2$ for an orientation of $E_A$ (see \cite[Definition 3.3]{EHN19}). A \emph{subgraph} will be a first-order substructure, i.e.\ an induced subgraph. For an oriented graph $(A, \rho_A)$, if $(x, y) \in \rho_A$, we write $xy \in \rho_A$, and for $x \in A$ we write $\dplus(x)$ for the out-degree of $x$.

\subsection{Sparse graphs: \texorpdfstring{$\Czero$}{C0} and \texorpdfstring{$\Cone$}{C1}}

    Recall (\cite[Definition 3.12]{EHN19}) that $\Czero$, $\Dzero$ denote the classes of finite $2$-sparse graphs and finite $2$-oriented graphs, and that $\Czero$ is the class of graph reducts of $\Dzero$. Let $\mc{D}_1$ be the class of finite $2$-oriented graphs with no directed cycles. By a slight abuse of terminology, we call a $2$-oriented graph with no directed cycles an \emph{acyclic $2$-oriented graph}. Let $\Cone$ be the class of graph reducts of $\Done$. We may consider $\Done$, $\Cone$ as ``simplified versions" of $\mc{D}_0$, $\mc{D}_1$: these classes are the \textbf{key examples} we are concerned with in this paper. They are originally from \cite{Eva03} and found in early preprint versions (\cite[Definition 3.16]{EHN18}) of \cite{EHN19}, though they do not appear in the published version. 
    
    We may also define $\Cone$ directly: the class $\Cone$ consists of the finite graphs $A$ where every non-empty subgraph $B \sub A$ has a vertex of degree $\leq 2$. This follows from the fact (\cite[Lemma 1.3]{Eva03}) that a finite graph $A$ has an acyclic $k$-orientation iff every non-empty subgraph $B$ has a vertex of degree $\leq k$ in $B$.

\subsection{Sparse graphs: \texorpdfstring{$\sq_s$}{successor-closedness} and \texorpdfstring{$\leq_1$}{1-closedness}}

    We now describe the distinguished notions of embedding used to define the particular strong classes $(\Cone, \leq_1)$, $(\Done, \sq_s)$. (Śee \cite[Definition 2.1]{EHN19} for strong classes and \cite[Section 3.4]{EHN19} for basic lemmas regarding $\sq_s$. The case of $\leq_1$ is originally from \cite{Eva05}.)

    For an oriented graph $A$ and $B \sub A$, we write $B \sq_s A$ to mean that $B$ is \emph{successor-closed} in $A$. For $B \sub A$, the \emph{successor-closure} $\scl(B)$ is the smallest successor-closed subset of $A$ containing $B$.

    Let $A, B \in \Cone$ with $A \sub B$. We write $A \leq_1 B$ if there exists an acyclic $2$-orientation $B^+ \in \Done$ of $B$ in which the induced orientation $A^+ \in \Done$ on $A$ has $A^+ \sq_s B^+$. By an argument entirely analogous to \cite[Section 3.4 and Lemma 4.8]{EHN19}, it is easy to show the following.

    \begin{fact*}[{\cite[Theorem 3.17]{EHN18}}] 
        $(\Cone, \leq_1)$ and $(\Done, \sq_s)$ are strong classes with free amalgamation, and the class $(\Done, \sq_s)$ is both a strong and a reasonable expansion of $(\Cone, \leq_1)$.
    \end{fact*}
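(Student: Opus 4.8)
The plan is to isolate a single ``re-orientation'' lemma and deduce all four assertions from it, exactly paralleling the treatment of $\sq_s$ in \cite[Section 3.4]{EHN19}. The lemma I would prove first is: if $A \sq_s C^+$ in $\Done$ and $A^+$ is any acyclic $2$-orientation of the graph reduct of $A$, then the orientation $\hat{C}$ obtained from $C^+$ by replacing the edges inside $A$ with those of $A^+$ and keeping every other edge as in $C^+$ again lies in $\Done$, is again successor-closed on $A$, and satisfies $\hat{C}|_A = A^+$. The entire force of this rests on the observation that successor-closedness forces every edge between $A$ and $C \setminus A$ to point \emph{into} $A$; hence re-orienting inside $A$ alters neither the cross-edges nor the out-degree of any vertex outside $A$, and leaves each vertex of $A$ with out-degree equal to its new out-degree within $A^+$, so $\hat{C}$ stays $2$-oriented. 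For acyclicity I would use that $A$ remains successor-closed in $\hat{C}$: any directed cycle meeting $A$ is trapped inside $A$ and so contradicts acyclicity of $A^+$, while a directed cycle avoiding $A$ lives in $C \setminus A$ with the unchanged orientation and so contradicts acyclicity of $C^+$.

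With this in hand I would verify that $(\Done, \sq_s)$ is a strong class: transitivity and downward-inheritance of $\sq_s$ are immediate from the definition of successor-closure, and reflexivity, $\emptyset \sq_s A$, and closure under isomorphism are trivial. For free amalgamation in $\Done$, given $A \sq_s B$ and $A \sq_s C$ I would form $D = B \oplus_A C$ with orientation inherited from $B$ and $C$ and no edges between $B \setminus A$ and $C \setminus A$. Then $B \sq_s D$ and $C \sq_s D$ hold because the only edges incident to $A$ from outside point inward, and acyclicity of $D$ follows from the same trapping argument: a directed cycle meeting $A$ stays in $A$, and a cycle avoiding $A$ stays in one of the two acyclic pieces.

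Next I would turn to $(\Cone, \leq_1)$, where the re-orientation lemma does the real work, since $\leq_1$ only asserts the \emph{existence} of a suitable orientation and distinct witnesses may clash. For transitivity, given witnesses $B^+$ for $A \leq_1 B$ and $C^+$ for $B \leq_1 C$, I would apply the lemma to replace the orientation of $B$ inside $C^+$ by $B^+$, obtaining $\hat{C} \in \Done$ with $B \sq_s \hat{C}$ and $\hat{C}|_B = B^+$; then $A \sq_s B^+ \sq_s \hat{C}$ gives $A \leq_1 C$ by transitivity of $\sq_s$. Downward-inheritance of $\leq_1$ is just restriction of a witnessing orientation. For free amalgamation, given $A \leq_1 B$ and $A \leq_1 C$ with witnesses inducing possibly different orientations $A^+, A^{++}$ of $A$, I would use the lemma to re-orient the $C$-side so that it also induces $A^+$ on $A$, then take the free amalgam in $\Done$ from the previous step; its graph reduct is the free amalgam of $B$ and $C$ over $A$, and the strong embeddings $B^+ \sq_s D^+$, $\hat{C} \sq_s D^+$ witness $B \leq_1 D$ and $C \leq_1 D$.

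Finally, that $(\Done, \sq_s)$ is a strong and reasonable expansion of $(\Cone, \leq_1)$ reduces to three points: the reduct of any $\sq_s$-embedding is a $\leq_1$-embedding directly from the definition; every graph in $\Cone$ admits an expansion in $\Done$ by the definition of $\Cone$ as the class of graph reducts of $\Done$; and the substantive lifting requirement — that a $\leq_1$-extension $A \leq_1 B$ lifts to a $\sq_s$-extension realising a prescribed $A^+ \in \Done$ — is once more the re-orientation lemma applied to any witnessing orientation of $B$. The main obstacle throughout is precisely this clash of orientations concealed in the existential definition of $\leq_1$; once the re-orientation lemma is established (its proof resting entirely on the ``cross-edges point inward'' consequence of successor-closedness), every clause follows formally, which is why the result can be called ``easy.''
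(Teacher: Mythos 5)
Your reconstruction is correct and is essentially the argument the paper defers to: the paper gives no proof beyond citing \cite[Theorem 3.17]{EHN18} and remarking that the claim follows ``by an argument entirely analogous to \cite[Section 3.4 and Lemma 4.8]{EHN19}'', and that argument is precisely your re-orientation lemma (resting on the observation that every edge between a $\sq_s$-closed set and its complement points inward) plus the trapping argument for acyclicity, which is the only ingredient not already present in the $\Dzero/\Czero$ case. All four assertions then follow formally, exactly as you lay out.
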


    (Recall the definition of a strong expansion and a reasonable expansion from \cite[Definition 2.9, Definition 2.14]{EHN19}.)

    Let $M_1$ be the \Fr limit of $(\Cone, \leq_1)$ and let $G_1 = \Aut(M_1)$. We now discuss the technical framework used to analyse minimal subflows of $\mc{LO}(M_1)$, the $G_1$-flow of linear orders on $M_1$.

\subsection{The order expansion of an amalgamation class}

    Let $(\mc{K}, \leq)$ be an amalgamation class of $L$-structures, and let $L^\prec = L \cup \{\prec\}$ with $\prec$ binary. Let $\mc{K}^\prec$ be the class of $L^+$-structures $(A, \prec_A)$, where $A \in \mc{K}$ and $\prec_A$ is a linear order on $A$. For $(A, \prec_A), (B, \prec_B) \in \mc{K}^\prec$, write $(A, \prec_A) \leq (B, \prec_B)$ if $A \leq B$ and $\prec_A$ is the restriction of $\prec_B$ to $A$. We call $(\mc{K}^\prec, \leq)$ the \emph{order expansion} of $(\mc{K}, \leq)$. It is straightforward to check that $(\mc{K}^\prec, \leq)$ is a strong class which is both a strong and reasonable expansion of $(\mc{K}, \leq)$.

\subsection{Flows from reasonable expansions}

    Let $L \sub L^+$ be relational languages. Let $(\mc{K}, \leq)$ be an amalgamation class of $L$-structures with \Fr limit $M$, and let $\mc{D}$ be a reasonable $L^+$-expansion of $(\mc{K}, \leq)$. Recall (\cite[Section 2.3, Theorem 2.15]{EHN19}) that we obtain an $\Aut(M)$-flow $X(\mc{D})$ from $\mc{D}$ by taking $X(\mc{D})$ to be the set consisting of the $L^+$-expansions $M^+$ of $M$ such that $M^+|_A \in \mc{D}$ for all $A \leq M$. (here, $M^+|_A$ denotes the $L^+$-structure induced on the domain of $A$ by $M^+$), where the topology on $X(\mc{D})$ is given by: for $B \leq M$ with expansion $B^+ \in \mc{D}$, we specify a basic open set $U(B^+) = \{M^+ \in X(\mc{D}) : M^+|_B = B^+\}$.
	
    As the order expansion $\mc{K}^\prec$ of an amalgamation class $(\mc{K}, \leq)$ (with \Fr limit $M$) is reasonable, we have that $X(\mc{K}^\prec)$ is an $\Aut(M)$-flow: we denote this by $\mc{LO}(M)$, the \emph{flow of linear orders} on $M$. As $\Done$ is a reasonable expansion of $(\Cone, \leq_1)$, we have that $X(\Done)$ is a $G_1$-flow, which we denote by $\Or(M_1)$, the \emph{flow of orientations}.

    We will need a very mild reformulation of \cite[Lemma 2.16]{EHN19}, a technical result regarding subflows of $X(\mc{D})$:
    
    \begin{fact} \label{YfindD'}
        Let $\mc{D}$ be a reasonable expansion of an amalgamation class $(\mc{K}, \leq)$ with \Fr limit $M$. Let $Y$ be a subflow of $X(\mc{D})$. Let $\mc{D}' \sub \mc{D}$ be the class of finite $L^+$-structures which $\leq$-embed into some element of $Y$. Then:
        \begin{enumerate}
            \item $\mc{D}'$ is a reasonable expansion of $(\mc{K}, \leq)$ with $X(\mc{D}') = Y$;
            \item if $Y = \ov{G \cdot M_0^+}$ for some $M_0^+ \in X(\mc{D})$, then $\mc{D}' = \Age_\leq(M_0^+)$.
        \end{enumerate}
    \end{fact}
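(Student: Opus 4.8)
The plan is to follow the proof of \cite[Lemma 2.16]{EHN19} closely, the only change being the passage to a general subflow $Y$ in place of the closure of a single orbit. Write $G = \Aut(M)$. The heart of part (1) is the identity $X(\mc{D}') = Y$. The inclusion $Y \sub X(\mc{D}')$ is immediate: if $M^+ \in Y$ and $A \leq M$ is finite, then $M^+|_A$ $\leq$-embeds into $M^+ \in Y$, so $M^+|_A \in \mc{D}'$ by definition, whence $M^+ \in X(\mc{D}')$. For the reverse inclusion $X(\mc{D}') \sub Y$ I would use that $Y$ is closed in the compact space $X(\mc{D})$, so it suffices to show that every basic open neighbourhood $U(M^+|_B)$ of a given $M^+ \in X(\mc{D}')$ (with $B \leq M$ finite) meets $Y$. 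Since $M^+|_B \in \mc{D}'$, it $\leq$-embeds into some $N^+ \in Y$; identifying the image, there is a finite $B' \leq M$ with $N^+|_{B'} \cong M^+|_B$ via an $L$-isomorphism $f \colon B \isoto B'$. By homogeneity of $M$, extend $f$ to some $g \in G$; then $g^{-1} \cdot N^+ \in Y$ (as $Y$ is $G$-invariant) and, chasing the flow-action conventions, $(g^{-1} \cdot N^+)|_B = M^+|_B$, so $g^{-1} \cdot N^+ \in U(M^+|_B) \cap Y$. As the sets $U(M^+|_B)$ form a neighbourhood basis at $M^+$ and $Y$ is closed, this gives $M^+ \in Y$.

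It remains, for part (1), to verify that $\mc{D}'$ is a reasonable expansion in the sense of \cite[Definition 2.14]{EHN19}. Heredity under $\leq$-substructures is immediate from transitivity of $\leq$-embeddings. To see that every $A \in \mc{K}$ has an expansion in $\mc{D}'$, use that $A$ $\leq$-embeds into $M$ (universality of the \Fr limit) and restrict any fixed element of $Y$ to the image. For the reasonable (extension) condition, given $A \leq B$ in $\mc{K}$ and an expansion $A^+ \in \mc{D}'$, I would realise $A^+$ as $M^+|_A$ for some $M^+ \in Y$ and some $A \leq M$, then use the extension property of the \Fr limit to find $B' \leq M$ with $A \leq B'$ and $B' \cong B$ over $A$; the restriction $B^+ := M^+|_{B'}$ then lies in $\mc{D}'$ and satisfies $A^+ \leq B^+$.

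For part (2), both inclusions are short. Since $M_0^+ = e \cdot M_0^+ \in Y$, any $A^+$ that $\leq$-embeds into $M_0^+$ also $\leq$-embeds into an element of $Y$, giving $\Age_\leq(M_0^+) \sub \mc{D}'$. Conversely, if $A^+ \in \mc{D}'$ it $\leq$-embeds into some $N^+ \in \ov{G \cdot M_0^+}$; taking $B \leq M$ with $N^+|_B \cong A^+$, the neighbourhood $U(N^+|_B)$ meets $G \cdot M_0^+$, so $(g \cdot M_0^+)|_B = N^+|_B$ for some $g \in G$. Since $g$ witnesses $M_0^+|_{g^{-1}(B)} \cong (g \cdot M_0^+)|_B \cong A^+$ and $g^{-1}(B) \leq M$ (as $G$ preserves $\leq$), we conclude $A^+ \in \Age_\leq(M_0^+)$.

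The one step requiring genuine care—and the main potential obstacle—is the translation argument inside $X(\mc{D}') \sub Y$: one must check, with the correct conventions for the $G$-action on $L^+$-expansions, that extending the abstract isomorphism $f$ to $g \in G$ really forces $(g^{-1} \cdot N^+)|_B = M^+|_B$ on the nose (not merely up to isomorphism), so that the translated point lands in the specified basic open set. Everything else is bookkeeping with the definitions of $\mc{D}'$, the topology on $X(\mc{D})$, and the extension and homogeneity properties of $M$.
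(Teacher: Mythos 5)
Your proposal is correct and takes essentially the same route as the paper: for part (2) your density-of-the-orbit argument (shrink to a basic open set $U(N^+|_B)$, intersect it with $G\cdot M_0^+$) is exactly the one given there, and for part (1) the paper simply cites \cite[Lemma 2.16]{EHN19}, whose content is the standard closedness-plus-homogeneity argument you write out. The convention issue you flag at the end resolves just as you indicate: the embedding witnessing $M^+|_B \in \mc{D}'$ is an isomorphism of $L^+$-structures onto $N^+|_{B'}$, so any $g \in G$ extending its $L$-reduct satisfies $(g^{-1}\cdot N^+)|_B = M^+|_B$ on the nose.
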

    \begin{proof}
        (1): \cite[Lemma 2.16]{EHN19}. (2): Let $A^+ \in \mc{D}'$. We may assume $A^+ \leq M_1^+$ for some $M_1^+ \in Y$. As $Y = \ov{G \cdot M_0^+}$, there is $g \in \Aut(M)$ such that $A^+ = M_1^+|_A = (gM_0^+)|_A$, so $A^+$ $\leq$-embeds into $M_0^+$ and thus $\mc{D}' \sub \Age_\leq(M_0^+)$. The reverse inclusion is immediate as $M_0^+ \in Y$.
    \end{proof}

\subsection{The expansion property}
        
    Let $(\mc{K}, \leq)$ be an amalgamation class with \Fr limit $M$ and let $\mc{D}$ be a reasonable expansion of $(\mc{K}, \leq)$. Recall that $\mc{D}$ has the \emph{expansion property} over $(\mc{K}, \leq)$ if for $A \in \mc{K}$, there exists $B$ in $\mc{K}$ with $A \leq B$ such that for all expansions $A^+, B^+$ of $A, B$ in $\mc{D}$, there exists a $\leq$-embedding $A^+ \to B^+$. Also recall the following fact:
	
    \begin{fact}[{\cite[Theorem 2.18]{EHN19}}] \label{miniffexp}
        The $G$-flow $X(\mc{D})$ is minimal iff $\mc{D}$ has the expansion property over $(\mc{K}, \leq)$.
    \end{fact}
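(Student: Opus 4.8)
The plan is to translate minimality of the flow into a purely combinatorial statement about the $\leq$-ages of its points, and then match that statement to the expansion property. The central observation is an orbit-closure characterisation: for $M^+, N^+ \in X(\mc{D})$ one has $M^+ \in \ov{G \cdot N^+}$ if and only if $\Age_\leq(M^+) \sub \Age_\leq(N^+)$. To see this, note that a basic open neighbourhood of $M^+$ has the form $U(B^+)$ with $B \leq M$ and $B^+ = M^+|_B$, and $gN^+ \in U(B^+)$ says exactly that $N^+|_{g^{-1}B} \isoto B^+$ via $g$. Since automorphisms of $M$ preserve $\leq$ and, by homogeneity of the \Fr limit, any $\leq$-isomorphism between $\leq$-substructures of $M$ extends to an element of $G$, the neighbourhood $U(B^+)$ meets $G \cdot N^+$ precisely when $B^+$ $\leq$-embeds into $N^+$. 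Intersecting over all basic neighbourhoods, and using that $\Age_\leq(M^+) = \{M^+|_B : B \leq M\}$ up to isomorphism, gives the claim.

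From this, minimality ($\ov{G \cdot N^+} = X(\mc{D})$ for every $N^+$) is equivalent to the assertion that all points of $X(\mc{D})$ have the same $\leq$-age. Since $\Age_\leq(M^+) \sub \mc{D}$ holds automatically (every finite $\leq$-substructure of a point lies in $\mc{D}$ by definition of $X(\mc{D})$), and since reasonableness of $\mc{D}$ lets us extend any $A^+ \in \mc{D}$ along a $\leq$-chain exhausting $M$ to a point of $X(\mc{D})$ containing it, this common age must be all of $\mc{D}$. So the goal reduces to showing that $\Age_\leq(M^+) = \mc{D}$ for every $M^+ \in X(\mc{D})$ if and only if $\mc{D}$ has the expansion property.

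The implication from the expansion property is easy: given $A^+ \in \mc{D}$ and $M^+ \in X(\mc{D})$, take $B \geq A$ witnessing the expansion property, realise $B \leq M$, and set $B^+ = M^+|_B$; then $A^+$ $\leq$-embeds into $B^+$, hence into $M^+$, so $\mc{D} \sub \Age_\leq(M^+)$. The substantial direction, and the main obstacle, is the converse: assuming the expansion property fails at some $A \in \mc{K}$, I must construct a point of $X(\mc{D})$ whose age omits some expansion of $A$. I would fix a $\leq$-chain $A = B_0 \leq B_1 \leq \cdots$ exhausting $M$ and, for each $n$, consider $\mathrm{Bad}(B_n)$, the finite set of expansions $A^+$ of $A$ for which some expansion of $B_n$ in $\mc{D}$ admits no $\leq$-embedding from $A^+$. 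Failure of the expansion property makes each $\mathrm{Bad}(B_n)$ non-empty, and restricting an avoiding expansion from $B_m$ to $B_n$ (for $m \geq n$) shows $\mathrm{Bad}(B_m) \sub \mathrm{Bad}(B_n)$; a decreasing chain of non-empty finite sets has non-empty intersection, yielding a single expansion $A^+$ avoided at every level.

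Finally I would promote this to a global obstruction via König's lemma. The tree whose level-$n$ nodes are coherent chains $B_0^+ \leq \cdots \leq B_n^+$ of expansions in $\mc{D}$ with $A^+$ not $\leq$-embedding into $B_n^+$ is finitely branching (each finite structure has only finitely many expansions) and, by the previous paragraph, has a node at every level; an infinite branch assembles to a point $N^+ \in X(\mc{D})$ into which $A^+$ does not $\leq$-embed, so $A^+ \in \mc{D} \setminus \Age_\leq(N^+)$ and minimality fails. The delicate points to get right are the coherence of the chosen expansions — handled by König rather than by a naive greedy choice, since an individual avoiding expansion of $B_n$ need not extend to one of $B_{n+1}$ — and the finiteness of the expansion sets, which holds for the order and orientation expansions relevant here.
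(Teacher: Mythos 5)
Your argument is correct, and since the paper only quotes this statement from \cite{EHN19} (Theorem 2.18) without reproducing a proof, the comparison is with the argument there. Your opening reduction is the same as the paper's: the orbit-closure characterisation $M^+ \in \ov{G \cdot N^+} \Leftrightarrow \Age_\leq(M^+) \sub \Age_\leq(N^+)$ is precisely the content of \Cref{YfindD'} (equivalently, $\ov{G\cdot N^+} = X(\Age_\leq(N^+))$), and the easy implication from the expansion property is standard. Where you genuinely diverge is in the direction ``minimal $\Rightarrow$ expansion property'': the usual proof (as in \cite{EHN19}, following \cite{NVT13}) works topologically, observing that for a fixed $A^+$ the $G$-invariant open set of points realising $A^+$ must be everything, and then extracting a finite subcover of $X(\mc{D})$ by sets $U(C^+)$ with $C^+ \cong A^+$ to manufacture the witness $B$. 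You instead prove the contrapositive combinatorially: the nested non-empty finite sets $\mathrm{Bad}(B_n)$ yield a single expansion $A^+$ avoidable at every level of an exhausting $\leq$-chain, and K\"{o}nig's lemma on coherent chains of avoiding expansions assembles a point of $X(\mc{D})$ omitting $A^+$. The two routes are equivalent in substance --- K\"{o}nig's lemma here is exactly the compactness of $X(\mc{D})$ in combinatorial form --- but your version has the virtue of isolating why a greedy choice of avoiding expansions fails (an avoiding expansion of $B_n$ need not extend to one of $B_{n+1}$) and what replaces it.

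One point you flag in passing deserves to be stated as a hypothesis rather than an aside: both of your uses of finiteness (the nested-intersection step and the finite branching of the tree), and equally the finite-subcover step in the topological proof, require that each finite structure has only finitely many expansions in $\mc{D}$. This precompactness assumption is part of the standing setup in the relevant section of \cite{EHN19} --- it is also what makes $X(\mc{D})$ compact in the topology described --- and it holds for the order and orientation expansions used in this paper, so your caveat is correctly placed and does not leave a gap for the applications at hand.
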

	
\subsection{Meagre orbits} \label{meagrebackground}
	
    We recall the weak amalgamation property (WAP), which will be crucial in the proof of Theorem \ref{M1meagre}. (WAP was first defined in \cite{KR07}, \cite{Iva99}. See \cite[Section 2.4]{EHN19} for the formulation of WAP for strong classes.)
	
    Let $\mc{D}$ be a reasonable class of $L^+$-expansions of an $L$-amalgamation class $(\mc{K}, \leq)$. Recall that $(\mc{D}, \leq)$ has the \emph{weak amalgamation property} (WAP) if for all $A \in \mc{D}$, there exists $B \in \mc{D}$ and a $\leq$-strong $L^+$-embedding $f : A \to B$ such that, for any $\leq$-strong $L^+$-embeddings $f_i : B \to C_i \in \mc{D}$ ($i = 0, 1$), there exists $D \in \mc{D}$ and $\leq$-strong $L^+$-embeddings $g_i : C_i \to D$ ($i = 0, 1$) with $g_0 \circ f_0 \circ f = g_1 \circ f_1 \circ f$. (Note that here we specify only that the diagram commutes for $A$.)

    \begin{fact}[{\cite[Lemma 2.23]{EHN19}}] \label{nowapmeagre}
        Let $\mc{D}$ be a reasonable class of $L^+$-expansions of an $L$-amalgamation class $(\mc{K}, \leq)$ with \Fr limit $M$. Suppose that $X(\mc{D})$ is a minimal flow.
			
        If $(\mc{D}, \leq)$ does not have the weak amalgamation property, then all $\Aut(M)$-orbits on $X(\mc{D})$ are meagre.
    \end{fact}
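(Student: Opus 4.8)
The plan is to prove the contrapositive, combined with a topological zero--one law. First observe that $X(\mc{D})$ is a Polish $\Aut(M)$-space: it is a closed subspace of the Polish space of all $L^+$-expansions of $M$, the sets $U(B^+)$ give it a second countable, completely metrizable topology, and $G := \Aut(M)$ acts continuously. Each orbit $G \cdot N^+$ is the continuous image of $G$ under $g \mapsto g \cdot N^+$, hence analytic, and so has the Baire property. Since $X(\mc{D})$ is minimal, every orbit is dense, so the action is topologically transitive; by the topological zero--one law for topologically transitive Polish actions, every $G$-invariant set with the Baire property is meagre or comeagre. In particular each orbit is meagre or comeagre, and (two comeagre sets meet, whereas distinct orbits are disjoint) at most one orbit is comeagre. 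Thus it suffices to prove: \emph{if $X(\mc{D})$ has a comeagre orbit, then $(\mc{D}, \leq)$ has the weak amalgamation property}; the Fact then follows by contraposition.

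So suppose $\mathcal{O} = G \cdot M^+$ is comeagre, with $M^+ \in X(\mc{D})$. As $\mathcal{O}$ is dense, $\ov{G \cdot M^+} = X(\mc{D})$, so \Cref{YfindD'}(2) gives $\mc{D} = \Age_\leq(M^+)$; in particular every $A \in \mc{D}$ is realized as (the induced expansion on) a finite $\leq$-substructure of $M^+$. The crux is the following local genericity property, which I would extract from comeagreness of $\mathcal{O}$: namely, $(\ast)$ for every finite $A \leq M^+$ there is a finite $B$ with $A \leq B \leq M^+$ such that, for every $\leq$-embedding $e : B \to C$ with $C \in \mc{D}$, there is a $\leq$-embedding $h : C \to M^+$ with $h \circ e$ restricted to $A$ equal to the inclusion $A \hookrightarrow M^+$.

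Granting $(\ast)$, the weak amalgamation property follows immediately. Given $A \in \mc{D}$, realize it inside $M^+$ and let $B$ be as in $(\ast)$, with $f : A \to B$ the inclusion. For $\leq$-embeddings $f_i : B \to C_i \in \mc{D}$ ($i = 0,1$), apply $(\ast)$ with $e = f_i$ to obtain $\leq$-embeddings $h_i : C_i \to M^+$ with $h_i \circ f_i \circ f$ equal to the inclusion $A \hookrightarrow M^+$. Choosing a finite $\leq$-substructure $D \leq M^+$ containing $h_0(C_0) \cup h_1(C_1)$ (which exists, since finite $\leq$-substructures of $M^+$ lie in $\mc{D}$ and successor-closures realize them inside $M^+$), and letting $g_i : C_i \to D$ be the corestriction of $h_i$, we get $g_0 \circ f_0 \circ f = g_1 \circ f_1 \circ f$, as both equal the inclusion of $A$. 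This is exactly a WAP-witness for $A$.

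The main obstacle is establishing $(\ast)$. This is the finitary shadow of genericity, in the spirit of the characterizations of comeagre orbits due to Ivanov and to Kechris--Rosendal, and I would prove it by a Baire category / Banach--Mazur game argument: writing $\mathcal{O}$ as an intersection of dense open sets and using that the conditions $U(M^+|_B)$ for $A \leq B \leq M^+$ form a neighbourhood basis at $M^+$, one shows that some condition $B$ must already "decide" every further $\leq$-extension up to reembedding into $M^+$ over $A$ --- otherwise, against any putative witness, one could build an element of $X(\mc{D})$ lying outside $\mathcal{O}$ within every neighbourhood of $M^+$, contradicting comeagreness. The delicate points are those of the strong-class formalism rather than the descriptive set theory: one must verify that the successor-closure operations produce the finite $\leq$-substructures needed to realize the amalgam $D$ inside $M^+$, and that the dense open sets witnessing comeagreness may be taken $G$-invariant (most cleanly handled by phrasing the argument as a Banach--Mazur game, which also removes any need for $\mathcal{O}$ to be $G_\delta$). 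I expect no essential difficulty beyond correctly interfacing this standard machinery with the $\leq$-embedding notion of the class.
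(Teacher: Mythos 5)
Your overall architecture matches the proof the paper points to (the paper gives no argument of its own here, deferring to \cite[Lemma 2.23]{EHN19} and its correction in \cite[Lemma 1.77]{Sul22}): pass to the contrapositive, use minimality and the topological zero--one law to upgrade a non-meagre orbit to a comeagre one, characterise comeagreness of the orbit of $M^+$ by the finitary ``weak homogeneity over $A$'' condition $(\ast)$, and read off WAP. The outer layers are sound: $X(\mc{D})$ is a closed subspace of a compact metrisable space of expansions, orbits are analytic and have the Baire property, minimality gives topological transitivity, and the deduction of WAP from $(\ast)$ is correct (the finite $\leq$-substructure $D \leq M$ containing $h_0(C_0) \cup h_1(C_1)$ exists simply because $M$ is a union of a chain of finite $\leq$-substructures; no successor-closure argument is needed at this level of generality, and the identification $\mc{D} = \Age_\leq(M^+)$ follows from \Cref{YfindD'} together with reasonableness).

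The gap is that $(\ast)$, which carries the entire mathematical content of the lemma, is not proved, and the one concrete mechanism you offer for it is not a valid contradiction. Producing ``an element of $X(\mc{D})$ lying outside $\mathcal{O}$ within every neighbourhood of $M^+$'' contradicts nothing: a comeagre set can have dense complement. To show that failure of $(\ast)$ at some $A$ forces the orbit to be meagre, you must show that player I defeats every strategy of player II in the Banach--Mazur game relativised to every nonempty basic open set, i.e.\ that the orbit is nowhere comeagre --- not merely that some point near $M^+$ escapes it. There is also a second subtlety your sketch does not address: the limit structure $N^+$ of a run of the game may still lie in the orbit via some $g \in \Aut(M)$ that does not fix your chosen copy of $A$, since the orbit is taken under the full group and not under the stabiliser of $A$; player I's strategy must diagonalise against all copies of $A^+$ appearing in the structure being built (or one must run a Kuratowski--Ulam argument over $\Aut(M) \times X(\mc{D})$). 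This is exactly the kind of delicate point on which the originally published proof needed correction, so it cannot be dismissed as standard machinery: as it stands your argument is a correct reduction of the Fact to $(\ast)$, not a proof of it.
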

    For a proof, see \cite[Lemma 1.77]{Sul22} (a straightforward correction of the proof in \cite{EHN19}).
	
\section{Admissible orders: a Ramsey expansion of \texorpdfstring{$(\Done, \sq_s)$}{D1 with successor-closed substructures}} \label{adm orders section}

    We now provide an explicit description of a Ramsey expansion of $(\Done, \sq_s)$, given by the \emph{admissible orders} on $(\Done, \sq_s)$, using Theorem 1.4 of \cite{EHN21}. This will be an essential tool in the proof of \Cref{M1meagre}. (This Ramsey expansion will also have the expansion property over $(\Done, \sq_s)$, though we will not use this.) In the below two definitions, we adapt definitions from \cite[Section 1 and Section 3]{EHN21} to the specific case of $(\Done, \sq_s)$.

    \begin{defn} \label{finlvldef}
        Let $A \in \Done$. For $a \in A$, let $a^\circ = \scl_A(a) \setminus \{a\}$.
        
        For $a \in A$, we inductively define the \emph{level} $\lvl_A(a)$ of $a$ as follows. If $\scl_A(a) = \{a\}$, then $\lvl_A(a) = 0$. Otherwise, let $b$ be a vertex of $a^\circ$ of maximum level, and then define $\lvl_A(a) = \lvl_A(b) + 1$. We write $\Lvl_n(A)$ ($n \geq 0$) for the set of vertices of $A$ of level $n$.

        We say that $a, b \in A$ are \emph{homologous} if $a^\circ = b^\circ$ and there is an isomorphism $\scl_A(a) \to \scl_A(b)$ which is the identity on $a^\circ = b^\circ$. We let $Q_A(a)$ denote the set of vertices of $A$ homologous to $a$, and call $Q_A(a)$ the \emph{cone} of $a$.

        If there is $a \in A$ with $A = \scl_A(a)$, we call $A$ a \emph{closure-extension} with \emph{head vertex} $a$, and write $A^\circ = a^\circ$. (Note that $a$ is necessarily unique.)
    \end{defn}

    \begin{defn} \label{adm order def}
        Fix a linear order $\tri$ on the set of isomorphism types of ordered closure-extensions $A^\prec$ such that:
        \begin{itemize}
            \item[($\ast$)] if $|A| < |B|$, then $A^\prec \lhd B^\prec$.
        \end{itemize}

        We say that a class $\mc{O} \sub \Done^\prec$ is a \emph{class of admissible orderings} of structures in $\Done$ if:
        \begin{enumerate}
            \item each $A \in \Done$ has an expansion $A^\prec \in \mc{O}$;
            \item $\mc{O}$ is closed under $\sq_s$-substructures;
            \item for $A^\prec \in \mc{O}$ and $u, v \in A$, if:
            \begin{itemize}
                \item $\scl_A(u)^\prec \lhd \scl_A(v)^\prec$, or
                \item $\scl_A(u)^\prec \cong \scl_A(v)^\prec$ and $u^\circ$ is lexicographically before $v^\circ$ in the order $\prec_A$,
            \end{itemize}
                then $u \prec_A v$;
            \item for each $B \in \Done$, if $A_1, \cdots, A_n \sq_s B$ and $\prec'$ is a linear order on $A = \bigcup_{i \leq n} A_i$ such that $\prec'$ satisfies (3) and each $A_i$ is admissibly ordered by $\prec'$, then there exists an admissible order $\prec_B$ on $B$ extending $\prec'$;
        \end{enumerate}
    \end{defn}

    (In the above, we adapt \cite[Definition 3.5]{EHN21}. Several aspects of the general definition in \cite{EHN21} simplify in this case: (A4) can be omitted as closure components are single vertices, and (A6) follows from $(\ast)$ and $(3)$.)

    The below theorem is an immediate translation of \cite[Theorem 1.4]{EHN21} to the context of this paper. We will explain how to adapt \cite[Theorem 1.4]{EHN21} to our context at the end of this section.

    \begin{prop} \label{Oone exists}
        There exists a class $\Oone \sub \Done^\prec$ of admissible orderings of structures in $\Done$. We have that $(\Oone, \sq_s)$ is an amalgamation class, and $(\Oone, \sq_s)$ has the Ramsey property and the expansion property over $(\Done, \sq_s)$.
    \end{prop}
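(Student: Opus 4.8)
The plan is to deduce \Cref{Oone exists} directly from \cite[Theorem 1.4]{EHN21}, the general Ramsey theorem for admissible orders, by verifying that $(\Done, \sq_s)$ together with the successor-closure operator $\scl$ is an instance of the framework developed there, and that \Cref{finlvldef} and \Cref{adm order def} are the correct specialisations of the general definitions. Concretely, I would first fix the dictionary between the two settings: the strong class is $(\Done, \sq_s)$, which is a free amalgamation class by the Fact above (\cite[Theorem 3.17]{EHN18}); the relevant closure operator is $\scl$; and the general notion of a closure-extension specialises to a structure $A = \scl_A(a)$ with (unique) head vertex $a$, as in \Cref{finlvldef}.

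Next I would verify the structural hypotheses that \cite[Theorem 1.4]{EHN21} requires of the closure operator. The key points are that $\scl$ has finite closures (immediate, since each $A \in \Done$ is finite and $\scl_A(a) \sub A$), that the $\sq_s$-substructures are exactly the successor-closed subsets and hence are closed under $\scl$, and that $\scl$ is compatible with the free amalgamation of $(\Done, \sq_s)$ — so that the level and cone data of \Cref{finlvldef} behave well under amalgamation. A significant simplification in our case, noted after \Cref{adm order def}, is that the closure components of the general framework reduce to single vertices here: the head vertex $a$ of a closure-extension is unique, so the combinatorics of ordering within a closure component collapses. This is precisely what allows axiom (A4) of \cite[Definition 3.5]{EHN21} to be omitted.

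I would then match the axioms of a class of admissible orderings. Conditions (1)--(4) of \Cref{adm order def} are the specialisations of the axioms of \cite[Definition 3.5]{EHN21} to this single-vertex setting: (1) asserts that every structure in $\Done$ has an expansion in $\mc{O}$; (2) that $\mc{O}$ is closed under $\sq_s$-substructures; (3) records the level-lexicographic constraint forcing $u \prec_A v$ whenever $\scl_A(u)^\prec \lhd \scl_A(v)^\prec$, or they are isomorphic and $u^\circ$ precedes $v^\circ$ lexicographically; and (4) is the level-by-level extension property that lets one assemble a global admissible order from admissible orders on $\sq_s$-pieces. As noted after \Cref{adm order def}, axiom (A6) of \cite{EHN21} need not be imposed because it follows from the choice of $\tri$ in $(\ast)$ (ordering closure-extensions so that smaller structures come first) together with condition (3). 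With these identifications in place, \cite[Theorem 1.4]{EHN21} applies and yields the existence of $\Oone \sub \Done^\prec$, the amalgamation property of $(\Oone, \sq_s)$, and both the Ramsey and the expansion properties over $(\Done, \sq_s)$.

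The main obstacle is not any single computation but rather establishing faithfully that the specialised definitions of level, cone, and admissible order agree with the general ones, and that $\scl$ meets every requirement of the framework (finite closures, closure under $\sq_s$, and compatibility with free amalgamation). Once these translation checks are complete, the substantive content — the actual Ramsey statement — is imported wholesale from \cite[Theorem 1.4]{EHN21}, which we use essentially as a black box.
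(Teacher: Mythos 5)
Your proposal is correct and takes essentially the same route as the paper: both treat \cite[Theorem 1.4]{EHN21} as a black box and reduce the proposition to a translation exercise, checking that \Cref{finlvldef} and \Cref{adm order def} are the faithful specialisations of the general definitions (with (A4) vacuous because closure components are single vertices and (A6) following from $(\ast)$ and (3)). The one step worth making explicit, which the paper carries out and your write-up leaves implicit in the phrase ``an instance of the framework,'' is the concrete encoding: one expands each $A \in \Done$ by a unary set-valued function symbol $F$ recording out-neighbourhoods, so that $\sq_s$-embeddings become ordinary embeddings of $\tld{L}$-structures and $\tDone$ is a free amalgamation class in the sense of \cite{EHN21} --- this is precisely how the hypotheses on the closure operator that you list get verified.
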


    \begin{lem}
        $(\Oone, \sq_s)$ is a strong expansion of $(\Done, \sq_s)$.
    \end{lem}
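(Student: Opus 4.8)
The goal is to show that $(\Oone, \sq_s)$ is a \emph{strong} expansion of $(\Done, \sq_s)$, in the sense of \cite[Definition 2.9]{EHN19}. Recall what this requires: for the forgetful reduct map $\pi : \Oone \to \Done$ sending $A^\prec \mapsto A$, strongness means (roughly) that the $\sq_s$-embedding structure is compatible between the two classes. Concretely, the two conditions to verify are: (i) the reduct of a $\sq_s$-substructure is a $\sq_s$-substructure — i.e.\ if $A^\prec \sq_s B^\prec$ in $\Oone$ then $A \sq_s B$ in $\Done$ — and (ii) a form of expansion-lifting compatible with $\sq_s$, namely that $\sq_s$-embeddings of reducts lift to $\sq_s$-embeddings of the ordered expansions. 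Let me lay out how each is handled.

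Condition (i) should be essentially definitional. The order $\prec$ is an additional relation on the same domain, and the notion $\sq_s$ (successor-closedness) depends only on the underlying $2$-orientation in $\Done$, not on $\prec$. So $A^\prec \sq_s B^\prec$ in the expanded language $L^\prec$ forces $A \sq_s B$ in $\Done$ immediately, since successor-closedness is witnessed by the orientation relation, which both classes share. First I would spell out that $\sq_s$ for $\Oone$ is inherited from $\sq_s$ for $\Done$ by the very definition of the order expansion together with clause (2) of \Cref{adm order def}, which says $\Oone$ is closed under $\sq_s$-substructures. This closure clause is precisely what guarantees that restricting an admissible order to a successor-closed subset again lands in $\Oone$.

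For condition (ii), the substantive content, I would use clause (1) of \Cref{adm order def} (existence of admissible expansions) together with the amalgamation and expansion-lifting built into \Cref{Oone exists}. Given a $\sq_s$-embedding $f : A \to B$ in $\Done$ and an admissible order $A^\prec \in \Oone$ on $A$, I need to produce an admissible order $B^\prec \in \Oone$ on $B$ extending $A^\prec$ along $f$, so that $f : A^\prec \sq_s B^\prec$ becomes a $\sq_s$-embedding in $\Oone$. The natural tool is clause (4) of \Cref{adm order def}: taking $n = 1$ and $A_1 = f(A) \sq_s B$ with the transported order, I first need a linear order $\prec'$ on $f(A)$ satisfying clause (3) and admissibly ordering $A_1$ — which is exactly the transported $A^\prec$ — and clause (4) then yields an admissible $\prec_B$ on $B$ extending it. I should check that the transported order does satisfy clause (3); this holds because clause (3) is an intrinsic condition on levels and cones that is preserved under isomorphism of ordered structures.

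The main obstacle, and the step to handle carefully, is the interaction between clause (3) and clause (4) in applying them to a single $\sq_s$-substructure. Clause (4) as stated requires that $\prec'$ satisfy clause (3) \emph{on all of $A = \bigcup A_i$}, not merely on each $A_i$ separately; with $n=1$ this is unproblematic, but I must confirm that the transported $A^\prec$ genuinely satisfies clause (3) with respect to the levels and cones \emph{computed in $B$} rather than in $A$. Since $A_1 \sq_s B$, levels and successor-closures of vertices of $A_1$ are computed identically in $A_1$ and in $B$ (successor-closedness guarantees $\scl_B(a) = \scl_{A_1}(a)$ for $a \in A_1$), so clause (3) transfers intact. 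Once this compatibility of $\scl$ under $\sq_s$ is noted, the verification is routine, and I would close by remarking that (i) and (ii) together are exactly the definition of a strong expansion, completing the proof.
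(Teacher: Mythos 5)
Your proposal is correct and follows essentially the same route as the paper's (very terse) proof: the closure/reduct conditions are immediate from the definition of the order expansion and clause (2) of \Cref{adm order def}, and the substantive lifting condition is exactly clause (4) of \Cref{adm order def} applied with a single $\sq_s$-substructure. Your extra check that clause (3) transfers because $\scl_B(a)=\scl_A(a)$ for $a$ in a successor-closed $A\sq_s B$ is a correct (and welcome) elaboration of what the paper leaves implicit.
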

    \begin{proof}
        Parts (1) and (2) in the definition of strong expansion are immediate. Part (3) follows immediately from part (4) of the definition of admissible orders. 
    \end{proof}

    We now give a specific property resulting from \Cref{adm order def} that we will use in the proof of \Cref{M1meagre}, the main result of this paper.
    \begin{lem} \label{vertex greater than base in adm order}
        Let $A^\prec \in \Oone$. Let $a \in A$ and let $b \in a^\circ$. Then $b \prec_A a$.
    \end{lem}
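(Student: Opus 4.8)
The plan is to deduce the inequality directly from condition (3) of \Cref{adm order def}, once the two relevant successor-closures have been compared by size. Concretely, I would take $u = b$ and $v = a$ and aim to verify the hypothesis of the first bullet of (3), namely $\scl_A(b)^\prec \lhd \scl_A(a)^\prec$; the conclusion $b \prec_A a$ is then immediate. So the entire task reduces to showing that $\scl_A(b)$ is a \emph{proper} subset of $\scl_A(a)$, because property $(\ast)$ of the fixed order $\tri$ on isomorphism types of ordered closure-extensions converts a strict size inequality $|\scl_A(b)| < |\scl_A(a)|$ into $\scl_A(b)^\prec \lhd \scl_A(a)^\prec$. (Here I use that $\scl_A(a)$ and $\scl_A(b)$, with their induced orders, are genuine ordered closure-extensions, with head vertices $a$ and $b$ respectively, so that $\tri$ applies to them.)

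The first step is the easy inclusion $\scl_A(b) \sub \scl_A(a)$: since $b \in a^\circ \sub \scl_A(a)$ and $\scl_A(a)$ is successor-closed, it contains the smallest successor-closed set containing $b$, which is $\scl_A(b)$ by definition.

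The key step, which I expect to be the only place a nontrivial hypothesis is used, is to show the inclusion is proper, equivalently that $a \notin \scl_A(b)$. This is where acyclicity of $A \in \Done$ enters. Because $b \in \scl_A(a) \setminus \{a\}$, there is a directed path (following $\rho_A$) of length at least one from $a$ to $b$. If we also had $a \in \scl_A(b)$, there would be a directed path from $b$ back to $a$, and concatenating the two paths would produce a directed cycle through $a$ and $b$, contradicting the fact that $A$ is acyclic. Hence $a \notin \scl_A(b)$, and since $a \in \scl_A(a)$ we get $\scl_A(b) \subsetneq \scl_A(a)$, so $|\scl_A(b)| < |\scl_A(a)|$.

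Finally I would assemble the pieces: property $(\ast)$ gives $\scl_A(b)^\prec \lhd \scl_A(a)^\prec$, and then the first bullet of condition (3) in the definition of an admissible ordering yields $b \prec_A a$, as required. The argument is short and the main (indeed only) subtlety is the appeal to acyclicity to rule out $a \in \scl_A(b)$; everything else is unwinding definitions.
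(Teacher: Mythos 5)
Your proposal is correct and follows exactly the paper's route: establish $|\scl_A(b)| < |\scl_A(a)|$ and then apply $(\ast)$ together with condition (3) of the definition of admissible orderings. The only difference is that the paper asserts the strict cardinality inequality without comment, whereas you justify it via acyclicity of $A \in \Done$ (ruling out $a \in \scl_A(b)$), which is a worthwhile detail to make explicit.
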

    \begin{proof}
        As $|\scl_A(b)| < |\scl_A(a)|$, by parts $(\ast)$ and $(3)$ in \Cref{adm order def} we have $b \prec_A a$.
    \end{proof}

    We now explain how to adapt \cite[Theorem 1.4]{EHN21} and the definition of admissible orders found in \cite{EHN21} to give the definitions and theorem above. The paper \cite{EHN21} gives Ramsey expansions for classes of finite structures in languages that may include \emph{set-valued function symbols}, which enables us to deal with the strong class $(\Done, \sq_s)$.

    \begin{defn}[{\cite[Section 1]{EHN21}}]
        A language $L = L_R \cup L_F$ of relation and set-valued function symbols consists of a set $L_R$ of relation symbols and a set $L_F$ of \emph{set-valued} function symbols $L_F$, where each symbol has an associated arity $n \in \N_+$. 
        
        An $L$-structure $(A, (R_A)_{R \in L_R}, (F_A)_{F \in L_F})$ consists of a set $A$ (the domain) together with sets $R_A \sub A^n$ for each relation symbol $R \in L_R$ of arity $n$ and functions $F_A : A^n \to \mc{P}(A)$ for each set-valued function symbol $F \in L_F$ of arity $n$. Usually we will just write $A$ to denote the structure.

        A function $f : A \to B$ between $L$-structures $A, B$ is an \emph{embedding} if $f$ is injective and:
        \begin{itemize}
            \item for each relation symbol $R \in L_R$ of arity $n$,
            \[(a_1, \cdots, a_n) \in R_A \Leftrightarrow (f(a_1), \cdots, f(a_n)) \in R_B;\]
            \item for each set-valued function symbol $F \in L_F$ of arity $n$,
            \[f(F_A(a_1, \cdots, a_n)) = F_B(f(a_1), \cdots, f(a_n)).\]
        \end{itemize}

        For $L$-structures $A, B$, we say that $A$ is a substructure of $B$, written $A \sub B$, if the domain of $A$ is a subset of the domain of $B$ and the inclusion map $A \hookrightarrow B$ is an embedding of $L$-structures.

        We define the hereditary property, joint embedding property, amalgamation property, Ramsey property and expansion property for classes of $L$-structures exactly as for usual first-order languages, and we also define amalgamation classes and Ramsey classes as before.
        
        Let $A \sub B_0, B_1$ be $L$-structures, and suppose that $B_0 \cap B_1 = A$. The \emph{free amalgam} of $B_0, B_1$ over $A$ is the $L$-structure $C$ with domain $B_0 \cup B_1$, where $R_C = R_{B_0} \cup R_{B_1}$ for each $R \in L_R$ and where, for each $F \in L_F$ of arity $n$, the function $F_C : C^n \to \mc{P}(C)$ is defined by $F_C(\bar{c}) = F_{B_i}(\bar{c})$ for $\bar{c} \in B_i^n$ ($i = 0, 1$) and $F_C(\bar{c}) = \varnothing$ otherwise. An amalgamation class where amalgams can always be taken to be free amalgams is called a \emph{free amalgamation class}.
    \end{defn}

    The above framework enables us to deal with $(\Done, \sq_s)$ as follows (\cite[Section 5.1]{EHN21}). Let $\tld{L}$ consist of the binary relational language $L_\text{or}$ of oriented graphs together with a unary set-valued function symbol $F$. Let $\tDone$ consist of the $\tld{L}$ structures $\tld{A} = (A, F_A)$ where $A \in \Done$ and $F_A : A \to \mc{P}(A)$ is a function sending each vertex of $A$ to its out-neighbourhood in $A$. Then there is a bijection $\Done \to \tDone$ sending each $A \in \Done$ to its unique $\tld{L}$-expansion $\tld{A}$ in $\tDone$, and for $A, B \in \Done$, we have that $A \sq_s B$ iff $\tld{A} \sub \tld{B}$. We then have that $\tld{L}$-embeddings between elements of $\tDone$ are $\sq_s$-embeddings when considered in the language $L_\text{or}$, and therefore $\tDone$ is a free amalgamation class.

    We now recall \cite[Theorem 1.4]{EHN21}, which will give us an explicitly defined Ramsey expansion of $\tDone$ via admissibly ordered structures.

    \begin{thm*}[{\cite[Theorem 1.4]{EHN21}}]
        Let $L$ be a language (consisting of relation and set-valued function symbols). Let $\mc{K}$ be a free amalgamation class of $L$-structures. Then there exists an explicitly defined amalgamation class $\mc{O} \sub \mc{K}^\prec$ of \emph{admissible orderings} such that:
        \begin{itemize}
            \item every $A \in \mc{K}$ has an ordering in $\mc{O}$;
            \item the class $\mc{O}$ has the Ramsey property and the expansion property over $\mc{K}$.
        \end{itemize}
    \end{thm*}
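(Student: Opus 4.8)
The plan is to verify the three assertions in turn — that $\mc{O}$ is an amalgamation class, that it has the Ramsey property, and that it has the expansion property over $\mc{K}$ — with essentially all of the difficulty concentrated in the Ramsey property. The guiding principle throughout is that the admissibility conditions are engineered so that (i) every structure in $\mc{O}$ is rigid, and (ii) once the orders on the parts of a free amalgam are fixed, the order on the amalgam can always be completed admissibly. These two features are exactly what a partite-construction proof of the Ramsey property needs.

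First I would pin down $\mc{O}$ explicitly and establish amalgamation. Given admissible orderings $A^\prec \sub B_0^\prec, B_1^\prec$ with $B_0 \cap B_1 = A$, I would form the free amalgam $C$ of $B_0, B_1$ over $A$ in $\mc{K}$ (using that $\mc{K}$ is a free amalgamation class), and then extend $\prec_{B_0}$ and $\prec_{B_1}$ to an admissible order on $C$. The conditions defining admissibility — order vertices first by the isomorphism type of their closure-extension, which is computed inside the $B_i$ and is therefore a substructure invariant of the free amalgam, and then lexicographically by the order on the closure base — are precisely what guarantee that such an extension exists; this is the content of condition (4) of \Cref{adm order def}, and for free amalgams it can be checked directly since $\scl$ of a vertex lives inside a single $B_i$. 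The same analysis shows $\mc{O}$ is closed under the relevant substructures, so $\mc{O}$ is an amalgamation class.

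The heart of the argument is the Ramsey property, for which I would use the partite construction of \Jarikn and R\"odl. The first step is rigidity: any isomorphism between structures in $\mc{O}$ must preserve levels and cones, and the admissible order then forbids any nontrivial permutation within a cone, so each $A^\prec \in \mc{O}$ is rigid — without this no Ramsey statement is possible. I would then run the partite construction over $\mc{K}$, building a Ramsey witness by iterating free amalgamation along the copies of a ``picture'' and reducing the colouring problem at each stage to the partite lemma. The main obstacle, and the reason the set-valued function language is essential, is that embeddings in this class must respect the closure operator: the function symbols encoding neighbourhoods force the partite construction to carry the closure structure coherently through every amalgamation step, so one needs the version of the partite lemma adapted to languages with set-valued functions, as developed in the closure-sensitive Ramsey framework of \Honza and \Jarikn. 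Checking that the iterated free amalgams remain admissibly orderable, and that the admissible order is compatible with the reductions performed by the partite lemma, is the delicate technical core of the whole proof.

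Finally, for the expansion property I would show that for each $A \in \mc{K}$ there is $B \in \mc{K}$ with $A \sub B$ such that every admissible ordering of $A$ embeds into every admissible ordering of $B$. Since $A$ is finite it has only finitely many admissible orderings, and the admissible order of any structure is largely dictated by its closure-type hierarchy; it therefore suffices to take $B$ containing sufficiently many disjoint $\sub$-copies of $A$ arranged so that, whatever admissible order $B$ receives, at least one copy is ordered in each prescribed way, and a pigeonhole argument over the finitely many order types delivers the embedding. I expect this last step to be routine compared with the Ramsey property, the genuine difficulty of the theorem lying in making the partite construction interact correctly with the closures recorded by the set-valued function symbols.
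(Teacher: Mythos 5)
This statement is not proved in the paper at all: it is Theorem~1.4 of \cite{EHN21}, quoted verbatim and used as a black box (the paper only explains how to \emph{specialise} it to $(\Done, \sq_s)$ via the class $\tDone$, yielding \Cref{Oone exists}). So there is no proof in the paper to compare yours against; what you have written is a blind reconstruction of the argument of \cite{EHN21}, and it should be judged on those terms.

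As a reconstruction, your outline correctly identifies the architecture — define the admissible orders via the closure-type hierarchy, check closure under substructures and free amalgamation of orders, prove Ramsey by a partite construction that carries the closure structure (encoded by the set-valued function symbols) through every amalgamation step, and handle the expansion property separately. But two points are genuine gaps rather than omitted routine detail. First, the entire content of the Ramsey property is deferred to ``the version of the partite lemma adapted to languages with set-valued functions, as developed in the closure-sensitive Ramsey framework of \Honza and \Jarikn'' — that framework \emph{is} the theorem you are being asked to prove (or its immediate ancestor), so as written the core of your argument is a citation, not a proof; the delicate part is precisely showing that the iterated partite amalgams stay inside the class and remain admissibly orderable, and you name this difficulty without resolving it. Second, your expansion-property argument does not work as stated: placing many disjoint copies of $A$ in $B$ and invoking pigeonhole gives nothing, because a single admissible order on $B$ may restrict to the \emph{same} order type on every copy. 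The correct route either uses the Ramsey property itself (the standard derivation of the ordering/expansion property from Ramsey plus a suitable construction) or exploits the rigidity of admissible orders directly, arranging copies of $A$ whose closure-type data force each prescribed admissible ordering of $A$ to be realised. A small further quibble: rigidity of structures in $\mc{O}$ is automatic for any finite linearly ordered structure and has nothing to do with admissibility, so that step of your argument is a red herring.
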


    The above theorem, together with \cite[Definition 3.5]{EHN21}, which gives the explicit definition of admissible orders, gives a Ramsey expansion $\tld{\mc{O}}_1$ of $\tDone$. Using the correspondence between $\tDone$ and $(\Done, \sq_s)$ detailed in the preceding paragraph, we thus obtain a Ramsey expansion $(\Oone, \sq_s)$ of $(\Done, \sq_s)$ satisfying the conditions of \Cref{adm order def} (this definition is just a direct adaptation of \cite[Definition 3.5]{EHN21}).
 
\section{Minimal subflows of \texorpdfstring{$M_1$}{M1}} \label{M1section}

    In this section, we prove the main theorem of this paper:

    \begin{thm} \label{M1meagre}
        Let $Y \sub \mc{LO}(M_1)$ be a minimal subflow of $\mc{LO}(M_1)$. Then all $G$-orbits on $Y$ are meagre.
    \end{thm}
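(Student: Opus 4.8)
The plan is to follow the strategy announced in the introduction: pass from the minimal subflow $Y$ to the class $\mc{D}'$ of finite ordered graphs that $\leq_1$-embed into some element of $Y$, and show that $(\mc{D}', \leq_1)$ fails the weak amalgamation property. By \Cref{YfindD'}(1), $\mc{D}'$ is a reasonable expansion of $(\Cone, \leq_1)$ with $X(\mc{D}') = Y$; since $Y$ is minimal, \Cref{miniffexp} gives that $\mc{D}'$ has the expansion property over $(\Cone, \leq_1)$. Then \Cref{nowapmeagre} reduces the theorem to a single combinatorial statement: $(\mc{D}', \leq_1)$ does not have WAP.

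First I would use the admissible orders to pin down the shape of the orders occurring in $Y$. The point of \Cref{vertex greater than base in adm order} is that, for an admissibly ordered structure, every edge is oriented from its $\prec$-later endpoint to its $\prec$-earlier endpoint: if $ab$ is an oriented edge then $b \in a^\circ$, so $b \prec a$. Consequently the orientation witnessing admissibility is \emph{forced} by the order (orient every edge backwards), and an ordered graph admits such a witness precisely when each vertex has at most two $\prec$-smaller neighbours (so that the backward orientation lies in $\Done$) together with the level/lexicographic constraints of \Cref{adm order def}(3). I would show, using the Ramsey and expansion properties of $(\Oone, \sq_s)$ from \Cref{Oone exists} together with minimality of $Y$, that every order in $\mc{D}'$ is of this form; the crucial gain is that the successor-closures $\scl(a)$ --- hence the levels $\lvl(a)$ --- become well-defined functions of the order alone, and that $u \prec v$ forces $\scl(u) \tri \scl(v)$ (in particular $|\scl(u)| \le |\scl(v)|$) by \Cref{adm order def}(3).

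With this in hand I would build the WAP-violating configuration around the closure-ordering constraint, which is the one unbounded obstruction available. The witnessing $A$ will consist of two incomparable vertices $u \prec v$ together with auxiliary ``marker'' vertices fixing enough order-types over $A$ that the elements I introduce cannot be identified in an amalgam. Given any $B \supseteq A$ in $\mc{D}'$, I would form $C_0, C_1 \supseteq B$ by attaching, in incompatible positions relative to the markers, successor-chains that enlarge $\scl(u)$ in $C_0$ and $C_1$ along disjoint, non-identifiable branches while keeping $\scl(v)$ small; since the amalgam glues only along $A$ and $u \in A$, the two branches accumulate in $\scl_D(u)$ and force $|\scl_D(u)| > |\scl_D(v)|$, contradicting $u \prec v$. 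Because closure size is unbounded, this obstruction cannot be saturated by $B$, and because the branches have distinct order-types over $A$ they cannot be identified away, so no amalgam lies in $\mc{D}'$.

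The main obstacle is exactly the step of controlling $\mc{D}'$ for an \emph{arbitrary} minimal subflow $Y$, rather than the distinguished one obtained by forgetting the orientation in $X(\Oone)$: I must rule out, using only minimality (the expansion property) and the Ramsey expansion $\Oone$, that $Y$ contains ``non-admissible'' orders in which some vertex has three or more smaller neighbours. Equally delicate is verifying that the closure-size clash genuinely survives every amalgam --- not just the free one --- which is where the marker vertices and the forced-backward-orientation description of admissibility (via \Cref{vertex greater than base in adm order}) do the real work, since they prevent both the reordering and the identification that would otherwise repair the amalgam.
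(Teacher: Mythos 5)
Your opening reduction is the same as the paper's (pass to the class of finite ordered graphs $\leq_1$-embedding into elements of $Y$ via \Cref{YfindD'}, note the expansion property from \Cref{miniffexp}, and aim to refute WAP so that \Cref{nowapmeagre} applies), but the core of your argument has a genuine gap, and it is exactly the one you flag yourself. You propose to show that \emph{every} order occurring in $\mc{D}'$ is ``admissible-like'', i.e.\ that each vertex has at most two $\prec$-smaller neighbours, so that the order canonically induces an acyclic $2$-orientation and well-defined successor-closures, and then to derive a contradiction from the monotonicity $u \prec v \Rightarrow |\scl(u)| \leq |\scl(v)|$. Neither minimality (which only gives the expansion property, a statement about embeddings between expansions, not about which expansions occur) nor the Ramsey property of $(\Oone, \sq_s)$ yields such a global description of $\mc{D}'$, and the paper never establishes anything of the sort. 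Note also that elements of $\mc{D}'$ are ordered \emph{graphs}, so $\scl$ is undefined without first producing an orientation; your closure-size contradiction in the amalgam $D$ therefore rests entirely on the unproven claim. The paper's route around this is the missing idea: extreme amenability of $H = \Aut(N_1, \prec_\alpha)$ (from \Cref{Oone exists}) gives an $H$-fixed order $\prec_\beta \in Y$, so $\mc{J} = \Age_{\leq_1}(M_1,\prec_\beta)$; one then forces orientations of the \emph{ambient} $2$-orientation $\rho$ of $M_1$ only on specially built configurations, by surrounding each vertex $v$ with ten homologous ``witness'' copies straddling it in $\prec_\beta$ and applying a pigeonhole argument together with $H$-invariance and \Cref{vertex greater than base in adm order} (this is \Cref{digraphWOG}, with \Cref{existsWOG} showing such configurations lie in $\mc{J}$). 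No claim about arbitrary orders in $Y$ is needed.

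The second divergence is the incompatibility mechanism. You attach successor-chains to inflate $|\scl(u)|$ past $|\scl(v)|$; even granting admissibility, this is fragile because the amalgam only has to commute over the single point $a_0$ (the paper's WAP base is a singleton, not a pair $u \prec v$), and nothing prevents the amalgam from reordering or reorienting so as to absorb the extra chain. The paper instead uses a rigid graph-theoretic obstruction: two trees $T_0(q)$ and $T_1(q)$, the second containing a $4$-cycle, attached to a common out-degree-saturated core $C$; after forcing the $\rho$-orientations, the ball $U_{2q+1}$ around the common image of $a_0$ must simultaneously have an acyclic annulus and contain a $4$-cycle in that annulus. I would recommend replacing your closure-size argument with this kind of orientation-level clash, and replacing the global admissibility claim with the fixed-point-plus-witness-vertex forcing.
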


\subsection{Preparatory definitions and lemmas}

    \begin{defn}
        Let $N_1 = (M_1, \rho)$ be the \Fr limit of $(\Done, \sq_s)$, and let $(N_1, \prec_\alpha)$ be the \Fr limit of $(\Oone, \sq_s)$. (Here we use \cite[Theorem 2.10]{EHN19}: $(\Done, \sq_s)$ is a strong expansion of $(\Cone, \leq_1)$ and $(\Oone, \sq_s)$ is a strong expansion of $(\Done, \sq_s)$.)
        
        Recall that $G_1 = \Aut(M_1)$. Let $H_1 = \Aut(N_1, \alpha)$. As $(\Oone, \sq_s)$ has the Ramsey property by \Cref{Oone exists}, by the fundamental result of the KPT correspondence (\cite[Theorem 4.8]{KPT05}) formulated for strong classes (\cite[Theorem 2.13]{EHN19}) we have that $H_1$ is extremely amenable.
			
        We will write $G = G_1, H = H_1$ in the remainder of \Cref{M1section} for ease of notation.
    \end{defn}
		
    \begin{defn}
        Let $a \in N_1$. As $N_1$ is the union of an increasing chain of $\sq_s$-substructures, we have that $\scl_{N_1}(a)$ is finite, and for any $A \sq_s N_1$ we have $\scl_{N_1}(a) = \scl_A(a)$. We define $a^\circ = \scl_{N_1}(a) \setminus \{a\}$, and define homologous vertices and cones in $N_1$ as in \Cref{finlvldef}. We define the \emph{level} $\lvl_{N_1}(a)$ of $a$ in $N_1$, usually just denoted $\lvl(a)$, to be the level of $a$ in $\scl_{N_1}(a)$.
    \end{defn}
		
    \begin{lem} \label{betaoncones}
        Let $\prec_\beta$ be an $H$-fixed point in the flow $H \curvearrowright \mc{LO}(M_1)$, and let $Q$ be a cone of $N_1$. Then $\prec_\beta$ agrees with either $\prec_\alpha^{}$ or $\prec_\alpha'$ on $Q$, where $\prec_\alpha'$ denotes the reverse of the linear order $\prec_\alpha$.
    \end{lem}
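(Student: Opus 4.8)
The plan is to reduce the statement to a transitivity property of the action of $H$ on the cone $Q$, after which the dichotomy follows formally. Concretely, I would first establish the claim: for any two $\prec_\alpha$-increasing pairs of distinct vertices of $Q$, say $(u,v)$ with $u \prec_\alpha v$ and $(u',v')$ with $u' \prec_\alpha v'$, there is $h \in H$ with $h(u)=u'$ and $h(v)=v'$. Granting this, $H$-invariance of $\prec_\beta$ gives $u \prec_\beta v \iff u' \prec_\beta v'$ for all such pairs, so the truth value of ``$u \prec_\beta v$'' is constant over all $\prec_\alpha$-increasing pairs in $Q$. Hence either $\prec_\beta$ agrees with $\prec_\alpha$ on every pair of $Q$, so that $\prec_\beta$ restricts to $\prec_\alpha$ on $Q$, or it reverses every pair, so that $\prec_\beta$ restricts to $\prec_\alpha'$; this is exactly the claimed dichotomy.

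To prove the transitivity claim I would invoke the $\sq_s$-homogeneity of the \Fr limit $(N_1, \prec_\alpha)$ of $(\Oone, \sq_s)$. Write $B = u^\circ$ for the common base of the cone $Q$; by definition of homologous (\Cref{finlvldef}), every $q \in Q$ satisfies $\scl_{N_1}(q) = \{q\} \cup B$ and attaches to $B$ by the same oriented edges. First I would note that no two vertices of $Q$ are adjacent: an oriented edge $q \to q'$ would force $q' \in \scl_{N_1}(q) = \{q\} \cup B$, hence $q' \in B = q'^\circ$, contradicting $q' \notin q'^\circ$. Consequently $A := \scl_{N_1}(\{u,v\}) = \{u,v\} \cup B$ and $A' := \scl_{N_1}(\{u',v'\}) = \{u',v'\} \cup B$ are $\sq_s$-substructures of $N_1$ (being successor-closures), lying in $\Oone$, in which the two top vertices are non-adjacent and attach identically to $B$. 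The base-fixing map $u \mapsto u'$, $v \mapsto v'$ is therefore an isomorphism of oriented graphs $A \to A'$.

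The crux, and the step I expect to be the main obstacle, is verifying that this base-fixing bijection is genuinely an isomorphism in $(\Oone, \sq_s)$ rather than merely of oriented graphs: the oriented-graph part is immediate from homologousness and non-adjacency, but order-preservation is where admissibility enters. By \Cref{vertex greater than base in adm order}, every vertex of $B$ lies $\prec_\alpha$-below each of $u,v,u',v'$, so in both $A$ and $A'$ the order $\prec_\alpha$ lists $B$ first, in its fixed order which the map preserves, and then the two top vertices in their relative $\prec_\alpha$-order; since $u \prec_\alpha v$ and $u' \prec_\alpha v'$, the order types match under the map. Thus we obtain an isomorphism in $(\Oone, \sq_s)$ between two $\sq_s$-substructures of $(N_1, \prec_\alpha)$, which by homogeneity extends to an automorphism $h \in H$ with $h(u)=u'$ and $h(v)=v'$. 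This completes the transitivity claim, and with it the lemma.
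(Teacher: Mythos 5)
Your proposal is correct and follows essentially the same route as the paper: both reduce the dichotomy to showing that any two $\prec_\alpha$-increasing pairs in $Q$ lie in the same $H$-orbit, by exhibiting an ordered digraph isomorphism between the successor-closures $\scl_{N_1}(u,v)$ and $\scl_{N_1}(u',v')$ fixing the common base (using \Cref{vertex greater than base in adm order} for order-preservation) and extending it to an element of $H$ by $\sq_s$-ultrahomogeneity. Your additional verification that vertices of a cone are pairwise non-adjacent is a detail the paper leaves implicit, but the argument is the same.
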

    \begin{proof}
        Take $a_0, b_0 \in Q$ with $a_0 \prec_\alpha b_0$. Then for $a, b \in Q$ with $a \prec_\alpha b$, by \Cref{vertex greater than base in adm order} there exists an ordered digraph isomorphism $f : \scl_{N_1}(a_0, b_0)^{\prec_\alpha} \to \scl_{N_1}(a, b)^{\prec_\alpha}$ with $f(a_0) = a, f(b_0) = b$, and by $\sq_s$-ultrahomogeneity we may extend to an element $f \in H$.
			
        As $H \sub G_\beta$, $f$ is $\beta$-preserving. If $a_0 \prec_\beta b_0$, then $f(a_0) \prec_\beta f(b_0)$, so $a \prec_\beta b$, and so $\prec_\beta$ agrees with $\prec_\alpha$ on $Q$. If $a_0 \succ_\beta b_0$, then $\prec_\beta$ agrees with $\prec_\alpha'$ on $Q$.
    \end{proof}

\subsection{Setup and proof notation}
    Before beginning the proof, we first need to set up our approach.
		
    Let $Y \sub \mc{LO}(M_1)$ be a minimal subflow of $G \curvearrowright \mc{LO}(M_1)$. As $H$ is extremely amenable, the flow $H \curvearrowright Y$ has an $H$-fixed point $\prec_\beta$, and as $Y$ is a minimal $G$-flow, we have $Y = \ov{G \,\cdot\!\prec_\beta}$. Let $\mc{J} = \Age_{\leq_1}(M_1, \prec_\beta)$. By \Cref{YfindD'}, we have $Y = X(\mc{J})$. We will show that $(\mc{J}, \leq_1)$ does not have the weak amalgamation property (WAP), which implies that all $G$-orbits on $Y$ are meagre by \Cref{nowapmeagre}.
    
    We will now use the above notation throughout the rest of this section.
		
\subsection{Proof idea - informal overview}
		
    We will assume $(\mc{J}, \leq_1)$ has WAP, for a contradiction. Let $\{a_0\} \in \mc{J}$ be a singleton with the trivial linear order. By assumption $\{a_0\}$ has a WAP-witness $A^\prec$. We will then construct $\leq_1$-embeddings of $A^\prec$ into two ordered graphs $C_0^\prec, C_1^\prec \in \mc{J}$ which are WAP-incompatible: it will not be possible to find $D^\prec$ completing the WAP commutative diagram for $\{a_0\}$ with the two embeddings, and this will give a contradiction.
		
    The incompatibility of the two ordered graphs $C_0^\prec, C_1^\prec$ in $\mc{J}$ will result from them forcing incompatible orientations: we can use the order $\prec_\beta$ to force certain edge orientations in $\rho$. The incompatible orientations will consist of a binary out-directed tree $T_0$ and a binary out-directed tree with the successor-closures of two vertices identified, which we denote by $T_1$: these cannot start from the same point of a $2$-orientation, as one contains a $4$-cycle and the other does not. The idea to use two incompatible orientations in the WAP commutative diagram comes from the proof of \cite[Theorem 5.2]{EHN19}.
		
    The key difficulties in the proof of Theorem \ref{M1meagre} are showing that we can use $\prec_\beta$ (specifically, particular finite ordered graphs in $\mc{J} = \Age_{\leq_1}(M_1, \prec_\beta)$) to force orientations of edges in $\rho$ (Lemma \ref{digraphWOG}), and also showing that the ordered graphs that we construct to force orientations of edges in $\rho$ do in fact lie in $\mc{J}$ (Lemma \ref{existsWOG}).
		
\subsection{Attaching trees and near-trees} 

    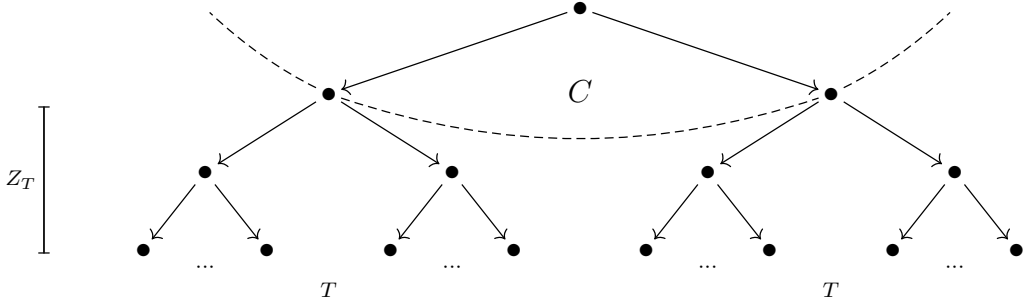
\begin{figure}
        \centering
        \[\begin{tikzcd}[cramped, , sep=small, nodes={inner sep=1.5pt}]
            &&&& {} &&&&&& \bullet &&&&&& {} &&& \\ \\
            & {} &&&&& \bullet &&&& C &&&& \bullet &&& \\ \\
            && {} && \bullet && {} && \bullet &&&& \bullet &&&& \bullet &&& \\ \\
            {} & {} && \bullet && \bullet && \bullet && \bullet && \bullet && \bullet && \bullet && \bullet &&&
            \arrow[from=1-11, to=3-7]
            \arrow[from=1-11, to=3-15]
            \arrow["{Z_T}"', maps to, no head, from=3-2, to=7-2]
            \arrow[maps to, no head, from=7-2, to=3-2]
            \arrow[curve={height=-3pt}, dashed, no head, from=3-7, to=1-5]
            \arrow[curve={height=22pt}, dashed, no head, from=3-7, to=3-15]
            \arrow[from=3-7, to=5-5]
            \arrow[from=3-7, to=5-9]
            \arrow[curve={height=3pt}, dashed, no head, from=3-15, to=1-17]
            \arrow[from=3-15, to=5-13]
            \arrow[from=3-15, to=5-17]
            \arrow[from=5-5, to=7-4]
            \arrow[from=5-5, to=7-6]
            \arrow[from=5-9, to=7-8]
            \arrow[from=5-9, to=7-10]
            \arrow[from=5-13, to=7-12]
            \arrow[from=5-13, to=7-14]
            \arrow[from=5-17, to=7-16]
            \arrow[from=5-17, to=7-18]
            \arrow["T"', curve={height=12pt}, draw=none, from=7-4, to=7-10]
            \arrow["T"', curve={height=12pt}, draw=none, from=7-12, to=7-18]
            \arrow["\cdots"', curve={height=2pt}, draw=none, from=7-4, to=7-6]
            \arrow["\cdots"', curve={height=2pt}, draw=none, from=7-8, to=7-10]
            \arrow["\cdots"', curve={height=2pt}, draw=none, from=7-12, to=7-14]
            \arrow["\cdots"', curve={height=2pt}, draw=none, from=7-16, to=7-18]
        \end{tikzcd}\]
        \caption{The oriented graph $D_T$.}
        \label{attachtrees}
    \end{figure}
    
    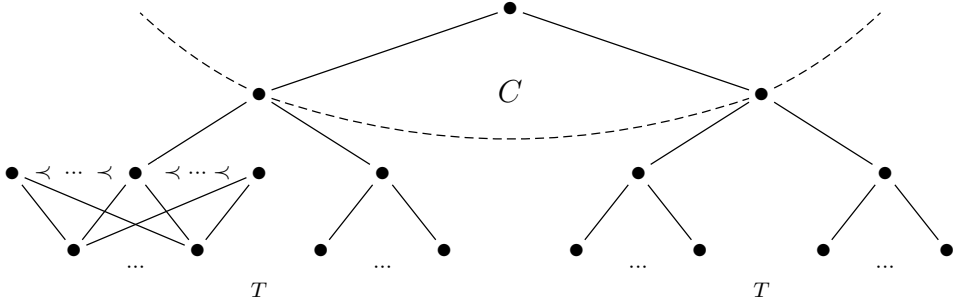
\begin{figure}
    \centering
        \[\begin{tikzcd}[cramped, sep=small, nodes={inner sep=1.5pt}]
            &&&& {} &&&&&& \bullet &&&&&& {} &&& \\ \\
            & {} &&&&& \bullet &&&& C &&&& \bullet &&& \\ \\
            && {\bullet } && \bullet && \bullet && \bullet &&&& \bullet &&&& \bullet &&& \\ \\
            {} & {} && \bullet && \bullet && \bullet && \bullet && \bullet && \bullet && \bullet && \bullet &&&
            \arrow[no head, from=1-11, to=3-7]
            \arrow[no head, from=1-11, to=3-15]
            \arrow[curve={height=-3pt}, dashed, no head, from=3-7, to=1-5]
            \arrow[curve={height=22pt}, dashed, no head, from=3-7, to=3-15]
            \arrow[no head, from=3-7, to=5-5]
            \arrow[no head, from=3-7, to=5-9]
            \arrow[curve={height=3pt}, dashed, no head, from=3-15, to=1-17]
            \arrow[no head, from=3-15, to=5-13]
            \arrow[no head, from=3-15, to=5-17]
            \arrow["{\prec \;\; \cdots \;\; \prec}"{description}, draw=none, from=5-3, to=5-5]
            \arrow[no head, from=5-3, to=7-4]
            \arrow[no head, from=5-3, to=7-6]
            \arrow["{\prec\; \cdots \;\prec}"{description}, draw=none, from=5-5, to=5-7]
            \arrow[no head, from=5-5, to=7-4]
            \arrow[no head, from=5-5, to=7-6]
            \arrow[no head, from=5-7, to=7-4]
            \arrow[no head, from=5-7, to=7-6]
            \arrow[no head, from=5-9, to=7-8]
            \arrow[no head, from=5-9, to=7-10]
            \arrow[no head, from=5-13, to=7-12]
            \arrow[no head, from=5-13, to=7-14]
            \arrow[no head, from=5-17, to=7-16]
            \arrow[no head, from=5-17, to=7-18]
            \arrow["T"', curve={height=12pt}, draw=none, from=7-4, to=7-10]
            \arrow["T"', curve={height=12pt}, draw=none, from=7-12, to=7-18]
            \arrow["\cdots"', curve={height=2pt}, draw=none, from=7-4, to=7-6]
            \arrow["\cdots"', curve={height=2pt}, draw=none, from=7-8, to=7-10]
            \arrow["\cdots"', curve={height=2pt}, draw=none, from=7-12, to=7-14]
            \arrow["\cdots"', curve={height=2pt}, draw=none, from=7-16, to=7-18]
        \end{tikzcd}\]
        \caption{The ordered graph $C_T^\prec$ with witness vertices indicated on one vertex.}
        \label{T-witness ordered graph}
    \end{figure}
        
    For $q \in \N_+$, let $T_0(q)$ be the digraph given by a binary tree of height $2q + 1$, oriented outwards towards the leaves and with head vertex $c$. Let $T_1(q)$ be the digraph given by taking $T_0(q)$ and identifying the successor-closures of two vertices at height $q+2$ whose paths to the head vertex $c$ meet at height $q$. We have $T_0(q), T_1(q) \in \mc{D}_1$. 
		
    Let $T$ be one of the digraphs $T_0(q)$ or $T_1(q)$. Take $C \in \mc{D}_1$ with each vertex having out-degree $2$ or $0$. Let $D_T$ be the digraph consisting of $C$ together with, for each vertex $v \in C$ with $\dplus(v) = 0$, a copy of $T$ attached at $v$, where we identify $c$ and $v$. Let $Z_T$ denote the sub-digraph of $D_T$ whose vertices are the vertices of the copies of $T$ attached to $C$ in $D_T$. Let $D_T^-$ denote the graph reduct of $D_T$. (We will use this notation throughout this section. See \Cref{attachtrees}.)
		
    We have $D_T \in \mc{D}_1$. Let ${D_T}'$ be the acyclic $2$-reorientation of $D_T$ where the copies of $T$ have been oriented so that the non-head vertices of each copy of $T$ are directed towards the head vertex $c$, leaving the orientation on vertices of $C$ unchanged. Then we have $C \sq_s {D_T}'$ in this reorientation, and so $C^- \leq_1 D_T^-$.
		
    \begin{defn} \label{defnWOG}
        Let $C \in \mc{D}_1$ with each vertex having out-degree $2$ or $0$, and let $D_T$ be defined as above.
			
        An ordered graph $C_T^\prec \in \mc{J}$ is a \emph{$T$-witness ordered graph for C} if:
        \begin{itemize}
            \item $C_T$ consists of the graph reduct ${D_T}^-$ of $D_T$ together with, for each non-leaf tree vertex $v$ of $D_T$, an additional $10$ copies of $\scl_{D_T}(v)$ freely amalgamated (as graphs) over $\scl_{D_T}(v)^\circ$, and $C \leq_1 C_T$;
            \item for each non-leaf tree vertex $v \in D_T$, the additional $10$ copies of $v$ may be labelled as $v_{-5}, \cdots, v_{-1}, v_1, \cdots, v_5$ so that $v_{-5} \prec \cdots \prec v_{-1} \prec v \prec v_1 \prec \cdots \prec v_5$ in $\prec_{C_T}$. (We call these $v_i$ the \emph{witness vertices} of $v$.)
        \end{itemize}
        (See \Cref{T-witness ordered graph}.)
    \end{defn}

    The following is the key lemma here.
	
    \begin{lem} \label{digraphWOG}
        Let $C \in \mc{D}_1$ with each vertex having out-degree $2$ or $0$, and let $C_T^\prec \in \mc{J}$ be a $T$-witness ordered graph for $C$. As $C_T^\prec \in \mc{J}$, there exists a $\leq_1$-ordered graph embedding $\theta : C_T^\prec \to (M_1^{}, \prec_\beta)$. Then, considering the digraph structure on $Z_T$ induced by $D_T$, $\theta|_{Z_T} : Z_T \to (M_1, \rho)$ is also a digraph embedding.
    \end{lem}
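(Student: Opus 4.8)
The plan is to study the acyclic $2$-orientation that the ambient orientation $\rho$ of $N_1=(M_1,\rho)$ induces on the image $\theta(C_T)$, and to show that on $\theta(Z_T)$ it coincides with the outward orientation coming from $D_T$. The point to keep in mind is that $\theta$ is assumed only to preserve the graph structure and the order $\prec_\beta$, so a priori $\rho$ restricts to an arbitrary acyclic $2$-orientation of $C_T$; in fact the orientation witnessing $\leq_1$ makes $C$ successor-closed and therefore orients the trees \emph{inward} (as in $D_T'$), so $\rho$ is genuinely not that orientation, and the whole content of the lemma is that $\prec_\beta$ nevertheless forces $\rho$ to orient the trees outward. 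I would prove this by induction up the trees, establishing for each non-leaf tree vertex $v$ with children $w_1,w_2$ that $\theta(v)\theta(w_1),\theta(v)\theta(w_2)\in\rho$.

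The engine at each such $v$ is the out-degree bound together with the witness vertices. The eleven vertices $v,v_{-5},\dots,v_{-1},v_1,\dots,v_5$ are all adjacent to both $w_1$ and $w_2$; since $\dplus\leq 2$ in any $2$-orientation, at most two of them can receive an out-edge from $w_1$ and at most two from $w_2$, so at least seven point into both. Any such vertex $x$ has $\scl_{N_1}(x)=\{x\}\cup\scl_{N_1}(w_1)\cup\scl_{N_1}(w_2)$, so these seven-or-more vertices share a common $\circ$-set, are pairwise homologous, and lie in a single cone $Q_v$ of $N_1$. For the inductive step proper, where $w_1,w_2$ are themselves non-leaves, this is all that is needed: by the inductive hypothesis each $\theta(w_i)$ already points to its own two children, so $\dplus(\theta(w_i))=2$ is saturated downwards and $w_i$ can have no out-edge to any of the eleven vertices. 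Hence every one of them, and in particular $\theta(v)$, points down to both $w_1$ and $w_2$, completing the step with no appeal to the order.

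The delicate case, and the step I expect to be the main obstacle, is the base of the induction, where $w_1,w_2$ are leaves and no saturation-from-below is available. Here one must use the order. By \Cref{betaoncones}, $\prec_\beta$ restricted to $Q_v$ is either $\prec_\alpha$ or its reverse $\prec_\alpha'$, and by \Cref{vertex greater than base in adm order} a vertex is $\prec_\alpha$-above each of its successors. If $\theta(v)$ failed to point down, say $w_1\to\theta(v)$, then $v$ would be a successor of $w_1$, hence of every witness $v_i$ with $v_i\to w_1$, forcing $\theta(v)\prec_\alpha\theta(v_i)$ for the (at least eight) such witnesses lying in $Q_v$. The symmetric placement of the ten witnesses, five below and five above $v$ in $\prec_\beta$, is designed precisely to clash this family of $\prec_\alpha$-relations against the fact that $\prec_\beta$ is monotone or anti-monotone relative to $\prec_\alpha$ on the cone $Q_v$. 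The real difficulty is that $v$ carries an extra edge to its parent, so it is not interchangeable with its witnesses and no symmetry argument is available; moreover, in the bad scenario $v$ lies \emph{outside} $Q_v$, so its $\prec_\beta$-position is not directly governed by \Cref{betaoncones}. Closing the contradiction therefore requires combining the five-on-each-side placement with acyclicity and with the global constraints coming from $C\leq_1 C_T$, and I expect this bottom-level forcing of each original tree vertex (rather than merely a majority of its witness-copies) to be where the genuine work lies.
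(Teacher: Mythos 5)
Your pigeonhole step (eleven vertices adjacent to both children, at most two of them can receive an out-edge from each child in a $2$-orientation) and your identification of the resulting cone are both correct, and your inductive step for interior vertices is valid as far as it goes. But the proposal has a genuine gap exactly where you say you "expect the genuine work to lie": the case in which the order must actually be used is left open, with only a description of why the obvious symmetry argument fails. In fact no induction and no leaf/non-leaf case split is needed; the paper runs one short argument uniformly at every non-leaf tree vertex $v$ with children $x, y$.

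The idea you are missing is the following. Split the ten witnesses into the five that are $\prec_\beta$-below $v$ and the five that are $\prec_\beta$-above. Since $x$ and $y$ each have at most two out-edges in $\rho$, some below-witness $v_i$ and some above-witness $v_j$ each send out-edges to both $x$ and $y$; as $\rho$ is a $2$-orientation these are their only out-edges, so $v_i^\circ = v_j^\circ = \scl_\rho(x,y)$ and $v_i, v_j$ are homologous. Now suppose $v$ fails to point down to one of its children, say $xv \in \rho$. Then $v \in \scl_\rho(x,y) = v_i^\circ = v_j^\circ$. You treat the observation that $v$ then lies outside the cone as an obstacle ("its $\prec_\beta$-position is not directly governed by \Cref{betaoncones}"), but it is the whole point: $v$ lies in the \emph{common base} of the cone, so the homology isomorphism $\scl(v_i) \to \scl(v_j)$ fixes $v$; it is $\prec_\alpha$-preserving by \Cref{vertex greater than base in adm order} (each of $v_i, v_j$ sits $\prec_\alpha$-above their common base), and by $\sq_s$-ultrahomogeneity of $(N_1,\prec_\alpha)$ it extends to some $h \in H \sub G_\beta$. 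Then $v_i \prec_\beta v$ gives $v_j = hv_i \prec_\beta hv = v$, contradicting $v \prec_\beta v_j$. No appeal to \Cref{betaoncones}, to acyclicity, or to the constraint $C \leq_1 C_T$ is required, and one never needs to compare $v$ with its witnesses \emph{inside} the cone: all that is used is that $v$ is sandwiched between a below-witness and an above-witness in $\prec_\beta$, which is built into \Cref{defnWOG}.
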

		
    \begin{proof}
        We may take $\theta = \id$ for ease of notation. Take $v, x, y \in Z_T$ with out-edges $vx, vy$ in the orientation of $Z_T$. We need to show that $v$ has out-edges $vx, vy$ in the orientation $\rho$ of $M_1$. Let $v_{-5}, \cdots, v_{-1}, v_1, \cdots, v_5$ be the witness vertices of $v$ in $C_T^\prec$, and let $v_0 = v$. As $\theta$ is a $\leq_1$-ordered graph embedding, we have that $v_i \prec_\beta v_j$ for $i < j$, and we have undirected edges $v_ix, v_iy$ for $-5 \leq i \leq 5$.
			
        As $\rho$ is a $2$-orientation, for some $i$ with $-5 \leq i \leq -1$ we must have that $v_ix, v_iy$ are out-edges of $\rho$, and likewise for some $j$ with $1 \leq j \leq 5$ we must have that $v_jx, v_jy$ are out-edges of $\rho$. If either $xv_0 \in \rho$ or $yv_0 \in \rho$, then $v_0 \in \scl_\rho(x, y)$, and as $v_i, v_j$ lie in the same cone, by \Cref{vertex greater than base in adm order} there exists $h \in H$ with $hv_i = v_j$ and $h$ fixing $v_0$. As $H \sub G_\beta$, we have that $h \in G_\beta$. But $v_i \prec_\beta v_0$, so $hv_i \prec_\beta hv_0$, thus $v_j \prec_\beta v_0$ - contradiction. So therefore $v_0x, v_0y \in \rho$.
    \end{proof}
		
    \begin{lem} \label{existsWOG}
        Let $C \in \mc{D}_1$ with each vertex having out-degree $2$ or $0$. Then there exists a $T$-witness ordered graph $C_T^\prec \in \mc{J}$ for $C$.
    \end{lem}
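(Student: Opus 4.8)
The plan is to realise $C_T^\prec$ directly as a $\leq_1$-substructure of $(M_1, \prec_\beta)$, by building the underlying oriented graph inside $N_1 = (M_1, \rho)$ and then reading off the order $\prec_\beta$. First I would fix the orientation of $C_T$ to be used for the embedding: orient the $D_T$-part as in $D_T$ itself (each copy of $T$ pointing outward towards the leaves), and orient each witness vertex $v_i$ so that it has out-edges to the two children of its tree vertex $v$, i.e.\ so that $\scl(v_i) = \{v_i\} \cup v^\circ$. Call the resulting oriented graph $\hat{C}_T$. Then each $v_i$ is homologous to $v$, so $v$ together with its witnesses lies in a single cone, and a short check gives $\hat{C}_T \in \mc{D}_1$. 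A \emph{separate} reorientation (tree edges pointing inward towards $c$, witness vertices kept as out-degree-$2$ sources) is acyclic and $2$-oriented and witnesses $C \sq_s \hat{C}_T$, hence $C \leq_1 C_T$ as required by \Cref{defnWOG}; this is the routine extension of the earlier computation $C^- \leq_1 D_T^-$ to the witness vertices.

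Next I would embed $\hat{C}_T$ as a $\sq_s$-substructure of $N_1$. This is possible because $\hat{C}_T \in \mc{D}_1$ and $N_1$ is the \Fr limit of $(\mc{D}_1, \sq_s)$; taking the image successor-closed gives $\scl_{N_1} = \scl_{\hat{C}_T}$ on the image, so the witnesses $v_i$ are genuinely homologous to $v$ in $N_1$. A key observation is that two distinct homologous vertices of $N_1$ are never adjacent: an edge between $v$ and $v'$ in the same cone would put one into the other's successor-closure, contradicting $v'^\circ = v^\circ$. Thus the witnesses sit as pairwise non-adjacent vertices all with out-neighbourhood $v^\circ$, which is exactly the free amalgamation over $v^\circ$ demanded by \Cref{defnWOG}.

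The heart of the argument is controlling the order on the witnesses. I would first note that within a single cone the admissible order is unconstrained: for homologous $u, v$ we have $u^\circ = v^\circ$ as sets, so part $(3)$ of \Cref{adm order def} is vacuous. Consequently $\prec_\alpha$ restricts to a dense linear order without endpoints on each (infinite) cone $Q_{N_1}(v)$, and inside the cone of the embedded $v$ I can choose five cone elements $\prec_\alpha$-below $v$ and five $\prec_\alpha$-above $v$, avoiding by genericity of $N_1$ any extraneous edges to the finite structure already built. Doing this simultaneously for the finitely many non-leaf tree vertices $v$ of $D_T$ yields a $\sq_s$-copy of $\hat{C}_T$ in $N_1$ whose $\prec_\alpha$-order has the witness ordering of \Cref{defnWOG}. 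Finally I transfer from $\prec_\alpha$ to $\prec_\beta$ using \Cref{betaoncones}: on each witness cone, $\prec_\beta$ agrees with $\prec_\alpha$ or with its reverse $\prec_\alpha'$, and in either case $v$ remains the middle element with exactly five witnesses on each side (a reversal merely interchanges ``below'' and ``above''). Relabelling the witnesses as $v_{-5} \prec_\beta \cdots \prec_\beta v_{-1} \prec_\beta v \prec_\beta v_1 \prec_\beta \cdots \prec_\beta v_5$, the same underlying map is a $\leq_1$-ordered-graph embedding of $C_T^\prec$ into $(M_1, \prec_\beta)$, so $C_T^\prec \in \mc{J}$ is a $T$-witness ordered graph for $C$.

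The main obstacle I anticipate is precisely this coordination between $\prec_\alpha$ and $\prec_\beta$: we have direct control only over the generic order $\prec_\alpha$ of the \Fr limit $(N_1, \prec_\alpha)$, whereas membership in $\mc{J} = \Age_{\leq_1}(M_1, \prec_\beta)$ is a statement about $\prec_\beta$. The point that makes the transfer go through is that the witness ordering is a purely \emph{within-cone}, reversal-symmetric condition, so \Cref{betaoncones} is exactly strong enough to push the construction from $\prec_\alpha$ to $\prec_\beta$ without our needing to know which of $\prec_\alpha, \prec_\alpha'$ occurs on any given cone.
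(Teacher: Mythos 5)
Your proposal is correct in substance, but it takes a genuinely different route from the paper's proof. The paper proceeds by induction on the non-leaf tree vertices $d_1, \ldots, d_k$ of $D_T$ enumerated by level: at each step it forms the free amalgam of $11$ copies of the partial structure $C_i$ over a successor-closed base $X$, embeds this into $M_1$, reads off $\prec_\beta$ directly to order the $11$ copies of $d_{i+1}$, and takes the \emph{median} copy as the distinguished vertex with the other ten as witnesses; the level-by-level induction is forced because the identity of each distinguished vertex (and hence of $\scl(d_{i+1})$, over which later witnesses must be amalgamated) is only determined a posteriori by $\prec_\beta$. You avoid the induction entirely by fixing the distinguished vertices in advance (as the tree vertices of a $\sq_s$-embedded copy of $D_T$ in $N_1$), drawing the witnesses from the cones $Q_{N_1}(v)$, arranging five on each side of $v$ with respect to the \emph{admissible} order $\prec_\alpha$, and then transferring to $\prec_\beta$ via \Cref{betaoncones} --- which, as you note, is reversal-symmetric, so the median property survives. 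This is shorter and arguably more conceptual, and it actually puts \Cref{betaoncones} to work (the paper states it but its later proofs only reuse the underlying homogeneity trick). What the paper's argument buys in exchange is self-containedness: it needs nothing about how $\prec_\alpha$ or $\prec_\beta$ behaves on cones. Two points in your write-up deserve slightly more care. First, the claim that each cone carries infinitely many elements on each side of $v$ under $\prec_\alpha$ (you assert density without endpoints, which is more than needed and itself unproved): this does hold, but should be justified by the homogeneity argument from the proof of \Cref{betaoncones} --- given cone elements $u_1 \prec_\alpha \cdots \prec_\alpha u_N$, \Cref{vertex greater than base in adm order} and $\sq_s$-ultrahomogeneity of $(N_1, \prec_\alpha)$ give $h \in H$ carrying the median $u_j$ to $v$, producing arbitrarily many cone-mates on each side of $v$ outside any prescribed finite set. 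Second, the absence of ``extraneous edges'' between chosen witnesses and the rest of the structure is not a matter of genericity but is automatic from successor-closedness of the image of $D_T$ and of each $\scl_{N_1}(u)$ (also fix the slip ``out-neighbourhood $v^\circ$'': the out-neighbourhood of a witness is that of $v$, namely the two children; it is the successor-closure that equals $\{u\} \cup v^\circ$). With these repairs the argument goes through.
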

	
    \begin{proof}
        Let $d_1, \cdots, d_k$ be an enumeration of the non-leaf tree vertices of $D_T$ which preserves the order of levels, i.e.\ for $i < j$, $\lvl_{D_T}(d_i) \leq \lvl_{D_T}(d_j)$. We will show, by induction on $i$, that for $0 \leq i \leq k$ there exists an ordered graph $C_i^\prec \in \mc{J}$ such that:
        \begin{enumerate}
            \item $C_i$ consists of $D_T$ together with, for $1 \leq j \leq i$, an additional $10$ copies of $\scl_{D_T}(d_j)$ freely amalgamated (as graphs) over $\scl_{D_T}(d_j)^\circ$;
            \item for $1 \leq j \leq i$, the $10$ copies of $d_j$ may be labelled as $d_{j, -5}, \cdots, d_{j, -1}, d_{j, 1}, \cdots, d_{j, 5}$ such that $d_{j, -5} \prec \cdots \prec d_{j, -1} \prec d_j \prec d_{j, 1} \prec \cdots \prec d_{j, 5}$ in $\prec_{C_i}$. We will call these the \emph{witness} vertices of $d_j$, and let $W_j$ denote the set of witness vertices of $d_j$.
        \end{enumerate}
        For the base case $i = 0$, take $C_0 = {D_T}^-$. As $C_0 \in \mc{C}_1$ and $\mc{J}$ is a reasonable class of expansions of $(\mc{C}_1, \leq_1)$, there exists a linear order $\prec_{C_0}$ on $C_0$ such that $C_0^\prec \in \mc{J}$, and then $C_0^\prec$ satisfies (1) and (2) vacuously.
			
        For the induction step, assume we have $C_i^\prec \in \mc{J}$ satisfying (1) and (2). Let \[X = \Lvl_0(D_T) \cup \bigcup_{1 \leq j \leq i} \scl_{D_T}(d_j) \cup \bigcup_{1 \leq j \leq i} W_j.\] There is an acyclic $2$-orientation $\tau_i$ of $C_i$ in which $X$ is successor-closed: take the orientation of $D_T$, and orient the two edges of each witness vertex $d_{j, m}$ outwards from $d_{j, m}$. Thus $X \leq_1 C_i$. Note that for $j' > i \geq j$ we have $\lvl_{D_T}(d_{j'}) \geq \lvl_{D_T}(d_{j})$, so $d_{j'} \notin X$ for $j' > i$.
			
        Let $(E, \tau)$ be the free amalgam of $(C_i, \tau_i)$ $11$ times over $(X, \tau_i)$. As $\mc{D}_1$ is a free amalgamation class, we have $(E, \tau) \in \mc{D}_1$. Hence $E \in \mc{C}_1$, and we have $X \leq E$.
			
        Let $\prec_X \,=\, \prec_{C_i}\!|_X$. We have that $X^\prec \in \mc{J}$, so let $\theta_X : X^\prec \to (M_1, \prec_\beta)$ be a $\leq_1$-ordered graph embedding. By the extension property of $M_1$, we have a $\leq_1$-graph embedding $\theta : E \to M_1$ extending $\theta_X$. Define a linear order $\prec_\zeta$ on $E$ by $x \prec_\zeta y$ iff $\theta(x) \prec_\beta \theta(y)$. We have that $\prec_\zeta$ is a linear order on $E$ extending $\prec_X$ on $X$, and that $\theta : (E, \prec_\zeta) \to (M_1, \prec_\beta)$ is a $\leq_1$-ordered graph embedding.
			
        We may label the 11 copies of $C_i$ in $E$ as $C_{i, m}$ ($-5 \leq m \leq 5$), with $\leq_1$-embeddings $\eta_m : C_i \to C_{i, m} \leq E$, and the corresponding copies of $d_{i+1}$ as $d_{i+1, m} \in C_{i, m}$, such that $d_{i+1, -5} \prec \cdots \prec d_{i+1, 5}$ in $\prec_\zeta$. Let ${C_{i+1}}' = C_{i, 0} \cup \{d_{i+1, m} : -5 \leq m \leq 5\}$. We have that $({C_{i+1}}', \tau) \sq_s (E, \tau)$, so ${C_{i+1}}' \leq E$. So $\theta : ({C_{i+1}}', \prec_\zeta) \to (M_1, \prec_\beta)$ is a $\leq_1$-ordered graph embedding.
			
        We have that ${C_{i+1}}'$ consists of a copy $C_{i, 0} = \eta_0(C_i)$ of $C_i$, where $\eta_0|_X = \id_X$ and $\eta_0|_X : (X, \prec_X) \to (X, \prec_\zeta)$ is order-preserving, together with witness vertices $d_{i+1, m}$ (where $1 \leq |m| \leq 5$) for $d_{i+1, 0} = \eta(d_{i+1})$.
			
        Recall that $C_i$ consists of $D_T$ together with, for $1 \leq j \leq i$, the witness vertices for $d_j$, and also that $X$ consists of $\Lvl_0(D_T)$ together with, for $1 \leq j \leq i$, $d_j$ and its witness vertices.
			
        Therefore $({C_{i+1}}', \prec_\zeta)$ consists of a graph-isomorphic copy $\eta_0(D_T)$ of $D_T$, together with witness vertices in $\prec_\zeta$ for $\eta_0(d_1) = d_1, \cdots, \eta_0(d_i) = d_i$ and witness vertices in $\prec_\zeta$ for an additional vertex $\eta_0(d_{i+1})$. We can therefore construct an ordered graph $C_{i+1}^\prec$ isomorphic to $({C_{i+1}}', \prec_\zeta) \in \mc{J}$ such that $C_{i+1}$ consists of $D_T$ together with witness vertices for $d_j$, $1 \leq j \leq i+1$. This completes the induction step. We then let $C_T^\prec = C_k^\prec$.	
    \end{proof}
	
\subsection{\texorpdfstring{$(\mc{J}, \leq_1)$}{J with 1-closed substructures} does not have WAP}
		
    \begin{prop}
        The class $(\mc{J}, \leq_1)$ does not have the weak amalgamation property.
    \end{prop}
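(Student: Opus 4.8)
The plan is to assume $(\mc{J}, \leq_1)$ has the weak amalgamation property and to contradict it by producing, from the two near-trees $T_0(q)$ and $T_1(q)$, a pair of extensions that cannot be amalgamated. First I would apply WAP to the one-point ordered structure $\{a_0\}$, obtaining a witness $f \colon \{a_0\} \to A^\prec$ with $A^\prec \in \mc{J}$; I write $a_0$ also for $f(a_0)$. Fixing a suitable $q$, the goal is to build two ordered graphs $C_0^\prec, C_1^\prec \in \mc{J}$, each receiving $A^\prec$ by a $\leq_1$-embedding $f_i \colon A^\prec \to C_i^\prec$, so that $C_i^\prec$ is a $T_i(q)$-witness ordered graph (for a common base) and hence, by \Cref{existsWOG}, lies in $\mc{J}$. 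The point of the witness vertices is \Cref{digraphWOG}: in any $\leq_1$-ordered embedding of $C_i^\prec$ into $(M_1, \prec_\beta)$, the near-tree $Z_{T_i}$ embeds into $(M_1, \rho)$ as a digraph carrying the out-orientation of $T_i(q)$.

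Next I would run the (assumed) WAP completion and transport everything into $M_1$. WAP gives $D^\prec \in \mc{J}$ and $\leq_1$-embeddings $g_i \colon C_i^\prec \to D^\prec$ with $g_0 f_0 f$ and $g_1 f_1 f$ agreeing on $\{a_0\}$; let $z \in D$ be their common value. Since $D^\prec \in \mc{J} = \Age_{\leq_1}(M_1, \prec_\beta)$, fix a $\leq_1$-ordered embedding $\psi \colon D^\prec \to (M_1, \prec_\beta)$ and put $x = \psi(z)$. Then $\phi_i := \psi g_i$ are $\leq_1$-ordered embeddings of $C_i^\prec$ into $(M_1, \prec_\beta)$ with $\phi_0(a_0) = \phi_1(a_0) = x$, and \Cref{digraphWOG} makes each $\phi_i$ restrict to a digraph embedding of $Z_{T_i}$ into $(M_1, \rho)$.

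The contradiction then comes from the fact that a single $2$-orientation cannot carry both a perfect binary out-tree and the near-tree $T_1(q)$ from the same root. Assuming the two near-trees are rooted at a common vertex $v^\ast$ (the crux; see below), a vertex count forces $\phi_0(T_0(q))$ to be the entire set of vertices reachable from $v^\ast$ by directed $\rho$-paths of length at most $2q+1$: it already has $2^{2q+2}-1$ vertices, the maximum possible when out-degrees are at most $2$. In particular $\phi_1(T_1(q)) \subseteq \phi_0(T_0(q))$. Every internal vertex of the tree $\phi_0(T_0(q))$ sends its only two $\rho$-out-edges to its two children there; so, writing $m$ for the merge vertex of $T_1(q)$ (at depth $q+2$), its two distinct in-neighbours in $\phi_1(T_1(q))$ lie at depth $q+1$ and are therefore internal vertices of $\phi_0(T_0(q))$, each of which must have $\phi_1(m)$ among its two children. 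This makes $\phi_1(m)$ a child of two distinct vertices of a tree, which is impossible. Hence no completion $D^\prec$ exists, and $(\mc{J}, \leq_1)$ fails WAP.

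The step I expect to be the main obstacle is the one flagged above: ensuring that the two near-trees really are rooted at one and the same vertex, even though WAP only forces $\phi_0$ and $\phi_1$ to agree at $a_0$. If $a_0$ could be taken as the root (head) of the grafted near-tree this would be automatic, but an arbitrary witness need not permit it: there are $A \in \mc{C}_1$ in which the marked vertex is a source in every acyclic $2$-orientation, and since any acyclic $2$-orientation of an ambient graph restricts to one of $A$, such a vertex can never be the successor-closed sink at which a near-tree is attached. My remedy would be to force, along with the near-tree, a rigid oriented stem in $\rho$ running from $a_0$ down to the grafting vertex $v_0$, by applying the witness-vertex mechanism of \Cref{digraphWOG} to the whole successor-closure carrying this stem; the stem then becomes part of the forced data $\scl_\rho(x)$, so that $\phi_i(v_0)$ depends only on $\phi_i(a_0) = x$ and $\phi_0(v_0) = \phi_1(v_0) = v^\ast$ follows. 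The real work is to check that such a stem can be realised inside $\mc{J}$ and that $\prec_\beta$, via \Cref{vertex greater than base in adm order}, rigidifies it enough to pin $v_0$ down uniquely rather than only up to the symmetries of the configuration.
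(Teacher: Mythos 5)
Your skeleton matches the paper's up to the derivation of the contradiction, and your final combinatorial step (the merge vertex of $T_1(q)$ acquiring two distinct parents inside the out-tree $\phi_0(T_0(q))$) is a correct alternative to the paper's $4$-cycle count \emph{provided} the two near-trees are grafted at one and the same vertex of $(M_1,\rho)$. But that proviso is precisely what you flag as the crux and do not establish, and it is where the proposal breaks down. Your proposed remedy --- a ``rigid oriented stem'' from $a_0$ to the grafting vertex $v_0$ --- is not carried out, and I do not see how to make it work: even if witness vertices force a directed $\rho$-path from $x=\phi_i(a_0)$ down to $\phi_i(v_0)$, every vertex of $\scl_\rho(x)$ has out-degree $2$, so $\phi_0(v_0)$ and $\phi_1(v_0)$ are merely endpoints of two directed paths of equal length from $x$, of which there are exponentially many; the only tool for telling the intended one apart is $\prec_\beta$, and \Cref{existsWOG} gives you no control over $\prec_\beta$ on the newly added vertices beyond the witness-vertex inequalities. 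A second, independent gap: you never explain how $A^\prec$ (whose linear order is handed to you by WAP) $\leq_1$-embeds into $C_i^\prec$ (whose linear order is whatever \Cref{existsWOG} happens to produce). The paper needs \Cref{miniffexp} here: since $Y=X(\mc{J})$ is minimal, $\mc{J}$ has the expansion property over $(\Cone,\leq_1)$, so one may choose $A\leq_1 B$ such that $A^\prec$ $\leq_1$-embeds into \emph{every} expansion of $B$ in $\mc{J}$, and then build the trees on top of $B$.

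The paper sidesteps the common-root problem entirely, and I would recommend you adopt its route. It completes $B$ to an acyclic $2$-oriented $C$ in which every vertex has out-degree $0$ or $2$, attaches a copy of $T_i(q)$ at \emph{every} sink of $C$ (with $q$ at least the number of levels of any acyclic reorientation of $C$), and then reasons about the intrinsically defined sets $U_n$ of vertices of $(M_1,\rho)$ reachable from the single common point $d=\theta_0\zeta_0(a_0)=\theta_1\zeta_1(a_0)$ by directed paths of length at most $n$. A degree count (the sinks of $C$ become tree heads, so every non-leaf vertex of $\theta_i(D_i)$ has both of its $\rho$-out-edges inside $\theta_i(D_i)$, while paths inside $\theta_i(C)$ have length less than $q$) shows that $U_{2q+1}\sub\theta_i(D_i)$ for both $i$ and that $U_{2q+1}\setminus U_{q-1}$ lies entirely within the attached trees. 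This one set is then acyclic when computed via $\theta_0$ but contains a $4$-cycle when computed via $\theta_1$ --- a contradiction that requires agreement of the two embeddings at the single vertex $a_0$ only, with no stem and no common root.
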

    \begin{proof}
        Suppose $(\mc{J}, \leq_1)$ has WAP, seeking a contradiction. Let $\{a_0\} \in \mc{J}$ be a singleton with the trivial linear order. Then there exists $\{a_0\} \leq_1 A^\prec \in \mc{J}$ with $A^\prec$ witnessing WAP for $\{a_0\}$. Take $A \leq_1 B \in \mc{C}_1$ witnessing for $A$ the expansion property of $\mc{J}$ over $(\mc{C}_1, \leq_1)$. (Here we use \Cref{miniffexp}, recalling that $Y = X(\mc{J})$ is a minimal $G$-flow.)
			
        Take $B^+ \in \mc{D}_1$ such that the undirected reduct of $B^+$ is $B$. For each $v \in B^+$ with $\dplus(v) = 1$, add to $B^+$ a new vertex $v'$ and out-edge $vv'$, and call the resulting digraph $C \in \mc{D}_1$. Note that each vertex of $C$ has out-degree $0$ or $2$. We have that $B \leq_1 C^-$ as undirected graphs. Let $q$ be the maximum number of levels in any acyclic reorientation of $C$ (i.e.\ if $C$ when reoriented has levels $0, \cdots, n$, then $q = n + 1$).
			
        For $i = 0, 1$, let $C_i^\prec \in \mc{J}$ be $T_i(q)$-witness ordered graphs for $C$, using Lemma \ref{existsWOG}, and let $D_i, Z_i$ denote $D_{T_i(q)}, Z_{T_i(q)}$ (the notation here is introduced just above Definition \ref{defnWOG}). 
			
        As $B \leq C_i$ witnesses the expansion property for $A$, there exist $\leq_1$-ordered graph embeddings $\zeta_i : A^\prec \to (B, \prec_{C_i}) \leq C_i^\prec$ ($i = 0, 1$). As $A^\prec$ witnesses WAP for $\{a_0\}$, there exists $D \leq M_1$ and $\leq_1$-ordered graph embeddings $\theta_i : C_i^\prec \to (D, \prec_\beta)$ with $\theta_0 \zeta_0 (a) = \theta_1 \zeta_1 (a)$. By Lemma \ref{digraphWOG}, $\theta_i|_{Z_i} : Z_i \to (D, \rho)$ are also digraph embeddings.
			
        If $r$ is a vertex of $C$ of out-degree $0$ in $C$, then as $\theta_i|_{Z_i}$ is a digraph embedding, $\theta_i(r)$ has out-degree $0$ in $\theta_i(C)$. Also $\theta_i$ is a graph embedding, so preserves the sum of out-degrees, and as each vertex of $C$ has out-degree $2$ or $0$ and $\theta_i(C)$ is $2$-oriented, we have that the vertices of $\theta_i(C)$ of out-degree $< 2$ are exactly the $\theta_i(r)$ for $r$ a vertex of $C$ of out-degree $0$.
			
        Let $d = \theta_i\zeta_i(a)$, and let $U_n$ be the set of vertices of $(M_1, \rho)$ that can be reached from $d$ by an outward-directed path of length $\leq n$. As the only vertices of $\theta_i(D_i)$ of out-degree less than $2$ are the leaves of the copies of $T_i$, we have $U_{2q + 1} \sub \theta_i(D_i)$ ($i = 0, 1$).
			
        We now obtain a contradiction by comparing the two cases $i = 0$ and $i = 1$. As $U_{2q + 1} \sub \theta_0(D_0)$, we have that $U_{2q + 1} - U_{q - 1}$ does not contain any (undirected) cycles. But as $U_{2q + 1} \sub \theta_1(D_1)$, we have that $U_{2q + 1} - U_{q - 1}$ contains a $4$-cycle - contradiction.
    \end{proof}
	
    This completes the proof of Theorem \ref{M1meagre}.

\subsection{\texorpdfstring{$\mc{LO}(M_1)$}{LO(M1)} is not minimal}
	
    We now quickly show that $\mc{LO}(M_1)$ is not in fact minimal itself.
	
    \begin{prop} \label{LOM1notmin}
        $\mc{LO}(M_1)$ is not a minimal flow.
    \end{prop}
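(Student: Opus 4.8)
The plan is to derive a contradiction from the failure of the weak amalgamation property just established. Suppose, for contradiction, that $\mc{LO}(M_1)$ is minimal. Then $\mc{LO}(M_1)$ is itself a minimal subflow, so I may run the setup of this section with $Y = \mc{LO}(M_1)$: since $H$ is extremely amenable it has a fixed point $\prec_\beta$ in $Y$, we have $Y = \ov{G \cdot \prec_\beta}$, and the associated class is $\mc{J} = \Age_{\leq_1}(M_1, \prec_\beta)$. The preceding proposition applies verbatim to this $Y$ and shows that $(\mc{J}, \leq_1)$ fails to have WAP.

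The crux is then to identify $\mc{J}$ with the full order expansion $\Cone^\prec$ of $(\Cone, \leq_1)$, i.e.\ the class of all linearly ordered graphs whose reduct lies in $\Cone$. By part (2) of \Cref{YfindD'}, $\mc{J}$ equals the class of finite ordered graphs that $\leq_1$-embed into some element of $Y$. As $Y = \mc{LO}(M_1) = X(\Cone^\prec)$, I would check that this class is all of $\Cone^\prec$: given any $(A, \prec_A) \in \Cone^\prec$ we have $A \leq_1 M_1$, and extending $\prec_A$ to a linear order $\prec$ on $M_1$ with $A$ order-preservingly $\leq_1$-embedded exhibits $(A, \prec_A)$ in the age of $(M_1, \prec) \in Y$. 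Hence $\mc{J} = \Cone^\prec$.

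Finally I would note that $(\Cone^\prec, \leq_1)$ does have WAP, which contradicts the previous paragraph. Indeed, $(\Cone^\prec, \leq_1)$ is a (strong) amalgamation class: given $\leq_1$-strong embeddings of $(A, \prec_A)$ into $(B_0, \prec_{B_0})$ and $(B_1, \prec_{B_1})$, one free-amalgamates the graph reducts over $A$ (so that $B_0, B_1 \leq_1 D$) and then interleaves $\prec_{B_0}$ and $\prec_{B_1}$ into a common linear extension $\prec_D$; amalgamation trivially implies WAP (take the WAP-witness to be the identity on $A$). Thus $\Cone^\prec$ both has and lacks WAP, and the assumption that $\mc{LO}(M_1)$ is minimal is untenable.

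The step I expect to require the most care is the identification $\mc{J} = \Cone^\prec$: one must be sure that feeding the whole flow into the machinery of this section genuinely recovers the entire order expansion as its age, which is where the extension property of $M_1$ is used and where \Cref{YfindD'} does the work of relating the age of the orbit closure of $\prec_\beta$ to the age of $Y$. The interleaving of linear orders in the amalgamation step is routine but should be spelled out, since the embeddings involved must remain $\leq_1$-strong. (Equivalently, one could phrase the whole argument through \Cref{miniffexp}, showing that $\Cone^\prec$ fails the expansion property over $(\Cone, \leq_1)$; but the WAP route is shorter given the preceding proposition.)
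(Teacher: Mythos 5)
Your argument is correct, but it is a genuinely different proof from the one in the paper. You argue by contradiction from the machinery of the main theorem: if $\mc{LO}(M_1)$ were minimal, the setup of this section applies with $Y = \mc{LO}(M_1)$, and the preceding proposition shows that $\mc{J} = \Age_{\leq_1}(M_1, \prec_\beta)$ fails WAP; on the other hand, \Cref{YfindD'}(2) together with the observation that every $(A,\prec_A) \in \Cone^\prec$ $\leq_1$-embeds into \emph{some} linear order on $M_1$ identifies $\mc{J}$ with the full order expansion $\Cone^\prec$, which does have WAP (free amalgamation of the graph reducts plus the standard amalgamation of linear orders agreeing on the base, noting that the order plays no role in $\leq_1$-strength). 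The steps all check out: under the minimality hypothesis $Y = \ov{G\,\cdot\!\prec_\beta}$, so the identification $\mc{J} = \Cone^\prec$ is legitimate, and the WAP-failure proposition does apply verbatim since $\mc{LO}(M_1)$ would be a minimal subflow of itself. The paper instead gives a direct, self-contained construction: it exhibits the reasonable class $\mc{Q}_1$ of ordered graphs whose order induces an acyclic $2$-orientation, obtains the subflow $X(\mc{Q}_1)$, and shows properness by producing (via genericity of the generic order) a vertex with three $\prec$-smaller neighbours. The paper's route is logically independent of \Cref{M1meagre} and is more informative, since it tells you concretely what a proper subflow looks like (and points towards the explicit minimal subflow in the author's thesis); your route is shorter given the preceding proposition, but uses the entire weight of the witness-ordered-graph machinery to prove a statement that has an elementary direct proof, and yields no description of any proper subflow. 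If you keep your version, do spell out the linear-order interleaving in the amalgamation step and the fact that \Cref{YfindD'}(2) requires $Y = \ov{G\,\cdot\!\prec_\beta}$, which is exactly where minimality is used.
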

    \begin{proof}
        Let $\mc{Q}_1$ be the class of ordered graphs $A^\prec$ where $A \in \mc{C}_1$ and $\prec_A$ induces a $2$-orientation $\tau_A$ on $A$: that is $\tau_A = \{ (x, y) \in E_A : y \prec_A x\}$ is a $2$-orientation (which must necessarily be acyclic, as $\prec_A$ is a linear order).
			
        We will show that $\mc{Q}_1$ is a reasonable class of expansions of $(\mc{C}_1, \leq_1)$ (see \cite[Definition 2.14]{EHN19}). Parts (2) and (3) of reasonableness are immediate. For parts (1) and (4), take $A^\prec \in \mc{Q}_1$ and $B \in \mc{C}_1$ with $A \leq_1 B$ (where we allow $A^\prec = \varnothing$). Let $\tau_A$ be the acyclic $2$-orientation induced by $\prec_A$ on $A$. As $A \leq_1 B$, there exists an acyclic $2$-orientation $\tau_B$ of $B$ extending $\tau_A$. Let $\prec_0 = \{(b, b') \in B^2 : b \neq b'$ and there exists an out-path from $b'$ to $b$ in $\tau_B\}$. Then $\prec_0$ is a strict partial order on $B$. $\prec_A$ and $\prec_0$ are compatible, and so we may extend the partial order $\prec_A \cup \prec_0$ arbitrarily to a linear order $\prec_B$ on $B$. Then $\prec_B$ induces $\tau_B$, so $(B, \prec_B) \in \mc{Q}_1$.
        
        By \cite[Theorem 2.15]{EHN19}, we therefore have that $X(\mc{Q}_1)$ is a subflow of $\mc{LO}(M_1)$. To see that it is a proper subflow, we produce a linear order on $M_1$ which does not induce an acyclic $2$-orientation. Let $\prec$ be the linear order of the \Fr limit of the order expansion $(\mc{C}_1^\prec, \leq_1)$ of $(\mc{C}_1, \leq_1)$. By genericity, there exists a graph $A \leq_1 M_1$ consisting of vertices $a, b_1, \cdots, b_3$ and edges $ab_i$ with $b_i \prec a$ ($1 \leq i \leq 3$), so $\prec$ does not induce a 2-orientation.
    \end{proof}

    See \cite[Section 4.6]{Sul22} for an explicit example of a minimal subflow of $\mc{LO}(M_1)$.

\medskip

\textbf{Acknowledgements.} The author would like to thank David Evans for his supervision during this project, which formed part of the second half of the author's PhD thesis.
  
\bibliographystyle{abbrv}
\bibliography{super}

\begin{thebibliography}{10}

\bibitem{AKL14}
O.~Angel, A.~S. Kechris, and R.~Lyons.
\newblock Random orderings and unique ergodicity of automorphism groups.
\newblock {\em Journal of the European Mathematical Society}, 016(10):2059--2095, 2014.

\bibitem{BMT17}
I.~Ben~Yaacov, J.~Melleray, and T.~Tsankov.
\newblock Metrizable universal minimal flows of {P}olish groups have a comeagre orbit.
\newblock {\em Geom. Funct. Anal.}, 27(1):67--77, 2017.

\bibitem{Eva13}
D.~Evans.
\newblock Homogeneous structures, omega-categoricity and amalgamation constructions.
\newblock \url{http://wwwf.imperial.ac.uk/~dmevans/Bonn2013_DE.pdf} - unpublished notes from talks given at the Hausdorff Institute for Mathematics, Bonn, 2013.

\bibitem{EHN18}
D.~Evans, J.~Hubi\v{c}ka, and J.~Ne\v{s}et\v{r}il.
\newblock Automorphism groups and {R}amsey properties of sparse graphs.
\newblock arXiv:1801.01165v3 - 2018 preprint (version 3) of the 2019 published paper, 2018.

\bibitem{EHN19}
D.~Evans, J.~Hubi\v{c}ka, and J.~Ne\v{s}et\v{r}il.
\newblock Automorphism groups and {R}amsey properties of sparse graphs.
\newblock {\em Proceedings of the London Mathematical Society}, 119(2):515--546, 2019.

\bibitem{EHN21}
D.~Evans, J.~Hubi\v{c}ka, and J.~Ne\v{s}et\v{r}il.
\newblock Ramsey properties and extending partial automorphisms for classes of finite structures.
\newblock {\em Fund. Math.}, 253(2):121--153, 2021.

\bibitem{Eva03}
D.~M. Evans.
\newblock Ample dividing.
\newblock {\em J. Symbolic Logic}, 68(4):1385--1402, 2003.

\bibitem{Eva05}
D.~M. Evans.
\newblock Trivial stable structures with non-trivial reducts.
\newblock {\em J. London Math. Soc. (2)}, 72(2):351--363, 2005.

\bibitem{Hru88}
E.~Hrushovski.
\newblock A stable $\aleph_0$-categorical pseudoplane.
\newblock Unpublished notes, 1988.

\bibitem{Hru93}
E.~Hrushovski.
\newblock A new strongly minimal set.
\newblock {\em Ann. Pure Appl. Logic}, 62(2):147--166, 1993.

\bibitem{Iva99}
A.~A. Ivanov.
\newblock Generic expansions of {$\omega$}-categorical structures and semantics of generalized quantifiers.
\newblock {\em J. Symbolic Logic}, 64(2):775--789, 1999.

\bibitem{KPT05}
A.~S. Kechris, V.~G. Pestov, and S.~Todorcevic.
\newblock Fra\"{i}ss\'{e} limits, {R}amsey theory, and topological dynamics of automorphism groups.
\newblock {\em Geom. Funct. Anal.}, 15(1):106--189, 2005.

\bibitem{KR07}
A.~S. Kechris and C.~Rosendal.
\newblock Turbulence, amalgamation, and generic automorphisms of homogeneous structures.
\newblock {\em Proc. Lond. Math. Soc. (3)}, 94(2):302--350, 2007.

\bibitem{Nas64}
C.~S. J.~A. Nash-Williams.
\newblock Decomposition of finite graphs into forests.
\newblock {\em J. of London Math Soc.}, 39(1):12, 1964.

\bibitem{NVT13}
L.~Nguyen Van~Th\'{e}.
\newblock More on the {K}echris-{P}estov-{T}odorcevic correspondence: precompact expansions.
\newblock {\em Fund. Math.}, 222(1):19--47, 2013.

\bibitem{Sul22}
R.~Sullivan.
\newblock {\em Aspects of the topological dynamics of sparse graph automorphism groups}.
\newblock PhD thesis, Imperial College London, 2022.

\bibitem{Zuc16}
A.~Zucker.
\newblock Topological dynamics of automorphism groups, ultrafilter combinatorics, and the generic point problem.
\newblock {\em Trans. Amer. Math. Soc.}, 368(9):6715--6740, 2016.

\end{thebibliography}

\end{document}